\newtheorem{theorem}{Theorem}[section]
\newtheorem{lemma}[theorem]{Lemma}
\newtheorem{corollary}[theorem]{Corollary}
\newtheorem{proposition}[theorem]{Proposition}
\theoremstyle{definition}
\newtheorem{definition}[theorem]{Definition}
\newtheorem{remark}[theorem]{Remark}
\numberwithin{equation}{section}
\newcommand{\R}{{\mathbb R}}
\newcommand{\norm}[1]{\left\lVert#1\right\rVert}
\DeclareMathOperator{\Id}{Id}
\renewcommand{\epsilon}{\varepsilon}
\renewcommand{\tilde}[1]{\widetilde{#1}}
\begin{document}
\title[The non-linear thermoelastic plate equation]{Local strong solutions for the non-linear thermoelastic plate equation on rectangular domains in $L^p$-spaces}

\author{S.\ Fackler}
\address{S.\ Fackler, Universit\"at Ulm, Institut f\"ur Angewandte Analysis, Ulm} %
\email{stephan.fackler@uni-ulm.de}

\author{T.\ Nau}
\address{T.\ Nau, Universit\"at Ulm, Institut f\"ur Angewandte Analysis, Ulm} %
\email{tobias.nau@uni-ulm.de}

\date{}

\begin{abstract}
In this article we consider the non-linear thermoelastic plate equation 
in rectangular domains $\Omega$.
More precisely, $\Omega$ is considered to be given as the Cartesian product of whole or half spaces and a cube. First the linearized equation is treated as an abstract Cauchy problem %
in $L^p$-spaces. We take advantage of the structure of $\Omega$ and apply operator-valued Fourier multiplier results to infer an $\mathcal R$-bounded $\mathcal H^\infty$-calculus %
for $A$. %
With the help of maximal $L^p$-regularity existence and uniqueness of local real-analytic strong solutions together with analytic dependency on the data is shown. %
\end{abstract}

\maketitle
\section{Introduction}\label{Sec_1}%
Let $n_1,n_2,n_3 \in \mathbb N_0$ such that $n_1 + n_2 + n_3 > 0$. We define the rectangular domain
\[
	\Omega := \mathbb R^{n_1} \times (0,\pi)^{n_2} \times (0,\infty)^{n_3}
\]
covering strips, half-strips and rectangles.
For $T > 0$ we consider the 
initial boundary value problem for the thermoelastic plate equation with a positive material constant $a \in (0,\infty)$ given by
\begin{equation}\label{TEP}
\begin{array}{r@{\quad=\quad}l@{\quad}l}
u_{tt} + \Delta^2 u + \Delta \theta + a\Delta(\Delta u)^3 & 0 & \text{in } (0,T) \times \Omega, \\
\theta_{t} -\Delta \theta - \Delta u_t & 0 & \text{in } (0,T) \times \Omega, \\
\Delta u \hspace{2mm} = \hspace{2mm} u \hspace{2mm}= \hspace{2mm} \theta &  0  & \text{on }(0,T) \times \partial \Omega,\\
u|_{t = 0} \hspace{2mm} = \hspace{2mm}  u_0,\hspace{2mm} u_t|_{t = 0} \hspace{2mm} =  \hspace{2mm} u_1,\hspace{2mm} 
\theta|_{t = 0} & \theta_0 & \text{in } \Omega.
\end{array}
\end{equation}
In our main result we prove unique local solvability of \eqref{TEP} in $L^p$ for $1 < p < \infty$.
We start our investigation with its non-homogeneous linearization
\begin{equation}\label{TEP_lin}
\begin{array}{r@{\quad=\quad}l@{\quad}l}
u_{tt} + \Delta^2 u + \Delta \theta  & g & \text{in } (0,T) \times \Omega, \\
\theta_{t} -\Delta \theta - \Delta u_t & h & \text{in } (0,T) \times \Omega, \\
\Delta u \hspace{2mm} = \hspace{2mm} u \hspace{2mm}= \hspace{2mm} \theta &  0  & \text{on }(0,T) \times \partial \Omega,\\
 u|_{t = 0} \hspace{2mm} = \hspace{2mm}  u_0,\hspace{2mm} u_t|_{t = 0} \hspace{2mm} =  \hspace{2mm} u_1,\hspace{2mm} 
\theta|_{t = 0} & \theta_0 & \text{in } \Omega
\end{array}
\end{equation}
and prove maximal $L^p$-regularity of an associated abstract first order Cauchy problem.
In fact, at the very beginning the conditions in lines~3~and~4 of \eqref{TEP} and \eqref{TEP_lin} will be replaced by
\begin{equation}\label{TEP_lin_add_2}
\begin{array}{r@{\quad=\quad}l@{\quad}l}
\Delta u \hspace{2mm} = \hspace{2mm} u_t \hspace{2mm}= \hspace{2mm} \theta &  0  & \text{on }(0,T) \times \partial \Omega,\\
\Delta u|_{t = 0} \hspace{2mm} = \hspace{2mm} \tilde u_0, \hspace{2mm} u_t|_{t = 0} \hspace{2mm} =  \hspace{2mm} u_1,
	\hspace{2mm} \theta|_{t = 0} & \theta_0 & \text{in } \Omega.
\end{array}
\end{equation}
This admits a comfortable transformation of \eqref{TEP} to a non-linear first order system
\begin{equation}\label{TEP_first_order}
	 U_t + A(D)U = \Phi(U) \ \text{ in }\ (0,T) \times \Omega,
	 \quad U|_{(0,T) \times \partial \Omega} = 0,
	 \quad U|_{t = 0} = U_0 
\end{equation}
and of \eqref{TEP_lin} to the according linearized non-homogeneous linear first order system
\begin{equation}\label{TEP_lin_first_order}
	  U_t + A(D)U = F \ \text{ in }\ (0,T) \times \Omega,
	 \quad U|_{(0,T) \times \partial \Omega} = 0,
	 \quad U|_{t = 0} = U_0
\end{equation}
respectively. We refer to Section \ref{Sec_Main_Theorems} for the precise transformation and the definitions of $U$, $A(D)$, $\Phi(U)$, and $F$. 
The strategy we pursue is to investigate the linearized system \eqref{TEP_lin_first_order} first, however,
subject to periodic boundary conditions in the infinite rectangular cylinder $\mathbb R^{n_1} \times (0,2 \pi)^{n_2}$. This allows for an application of powerful $L^p$-Fourier multiplier theorems. Afterwards, with the aid of reflection techniques the linearized system \eqref{TEP_lin_first_order} in the space domain $\Omega$ is treated. To prove  maximal $L^p$-regularity of the associated operator we employ a celebrated result from \cite{Weis-2001} roughly saying that the property of $\mathcal R$-sectoriality is equivalent to maximal $L^p$-regularity.
With the help of maximal $L^p$-regularity the non-linear system obtained from equation \eqref{TEP} is treated using the Banach fixed point theorem.
Finally, we transfer our results on the first order systems (\eqref{TEP_first_order} - \eqref{TEP_lin_first_order}) to the original systems (\eqref{TEP} - \eqref{TEP_lin}). This can be done e.g.~in the case that $\Omega \subset \mathbb R^2$ does not define the whole or a half space, i.e.~ provided $n_2 \neq 0$.

The non-linear model \eqref{TEP} for the thermoelastic plate equation also appears in \cite{Lasiecka-Maad-Sasane-2008}. There the motivation and derivation from physics is recalled briefly. 
We further refer to the overview on recent results on thermoelasticity given there. The appendix in \cite{Lasiecka-Maad-Sasane-2008} comments on an approach to the non-linear thermoelastic plate equation based on maximal regularity in the context of H\"older spaces for the linearized system. Here results from \cite{Lunardi-1984} and \cite{Lunardi-1995} are employed. In contrast to the theory of H\"older spaces we present a maximal regularity approach via the notion of $\mathcal R$-sectoriality in $L^p$-spaces. Note that maximal $L^p$-regularity implies the generation of an analytic $C_0$-semigroup.  
A generalization of the linearized thermoelastic plate equation in $L^p(\mathbb R^n)$ was considered in \cite{Denk-Racke-2006}. There the Newton-polygon method is used to deal with the mixed order structure of the so-called $\alpha$-$\beta$-system. The authors prove analyticity of the associated $C_0$-semigroup and decay properties of the solution. Results for the half-space problem in $L^p$-spaces supplemented with generalized Dirichlet boundary conditions are proved in \cite{Naito-Shibata-2009}. Bounded and exterior domains with boundaries of class $C^4$ are added later in the articles \cite{Denk-Racke-Shibata-2009} and \cite{Denk-Racke-Shibata-2010}. For earlier results in $L^p$-spaces we refer the reader to the references given there. It is worthwhile to mention that our setting includes unbounded domains with non-compact boundary and corners at the same time. The case of a rectangular plate e.g. is not only new in the $L^p$-setting but also very relevant from the perspective of physical
applications.
Actually, besides the type of boundary conditions it is this special structure of the underlying space domain $\Omega$ which allows for the multiplier approach to be carried out in the article in hand. Similar results to ours on \eqref{TEP} and \eqref{TEP_lin} 
seem to be available adapting the Newton-polygon method from \cite{Denk-Racke-2006} and the reflection arguments from below.

Recently and independently of our work the thermoelastic plate equation is studied in~\cite{Lasiecka-Wilke-2013} also using maximal $L^p$-regularity theory. There the authors consider bounded domains with $C^2$-boundary, whereas our methods are adapted to the physical relevant case of rectangular domains. Moreover, for $C^2$-boundaries one can rely on maximal regularity results developed in~\cite{Denk-2003}, whereas for domains with corners new methods are necessary as developed below.

\section{Roadmap \& the main theorems}\label{Sec_Main_Theorems}
We start this section with the detailed description of the transformation of lines~1~and~2 in \eqref{TEP} and \eqref{TEP_lin} supplemented with the conditions \eqref{TEP_lin_add_2} to first order problems as introduced in \cite{Liu-Renard-1995}. Having set
\begin{equation} \label{Trans}
U := (U_1,U_2,U_3)^T := (\Delta u, u_t, \theta)^T
\quad \text{and} \quad
F := (0,g,h)^T
\end{equation}
we rewrite lines~1~and~2 of \eqref{TEP_lin} as
\begin{equation}\label{Def_M}
 U_t -
 M\Delta U
	= F,
	\quad \text{where} \quad
	M := 
	\begin{pmatrix}
	 0& 1& 0\\
	 -1& 0& -1\\
	 0& 1& 1
	\end{pmatrix}.
\end{equation}
Looking at \eqref{TEP_lin_add_2}, the initial conditions turn to $U|_{t = 0} = U_0$, where $U_0 := (\tilde u_0, u_1, \theta_0)^T$ and the boundary conditions imply Dirichlet boundary conditions for the transformed system \eqref{Def_M}, i.e.~$U|_{(0,T) \times \partial \Omega} = 0$. 
Following \cite{Lasiecka-Maad-Sasane-2008} we let $\phi(s) := s^3$. Then the transformed matrix equation associated with lines 1~and~2 of the non-linear thermoelastic plate equation \eqref{TEP} in $\Omega$ reads as
\begin{equation}\label{Def_Phi}
 U_t -
 M\Delta U = \Phi(U),
	\quad \text{where} \quad
	\Phi (U) := -a\Delta 
	\begin{pmatrix}
	 0\\
	 \phi(U_1)\\
	 0
	\end{pmatrix}.
\end{equation}
In view of Fourier transform and Fourier coefficients we introduce the differential expression $D_j := -i \partial_j$, where $i$ denotes the imaginary unit. Employing the notation $A(D) := M \sum_{j = 1}^nD_j^2$ we are led to problems \eqref{TEP_first_order} and \eqref{TEP_lin_first_order}.

We are now in the position to state the main theorems of this article. Let $1 < p < \infty$. We realize the matrix differential operator $A(D)$ in $L^p(\Omega)^3$ as
\begin{equation}\label{Real}
\begin{aligned}
 D(A) & :=  \big(W^{2,p}(\Omega) \cap  W^{1,p}_0(\Omega)\big)^3,\\
 A u & :=  A(D) u \quad (u \in D(A)).
\end{aligned}
\end{equation}
As a first result we prove that $A$ has an $\mathcal R$-bounded $\mathcal H^\infty$-calculus which implies $\mathcal R$-sectoriality of $A$ (see Definition \ref{Def_Rversions}). Moreover, we prove invertibility of $A$ in the case that $\Omega$ defines a strip, a half-strip or a rectangle. 
\begin{theorem}\label{Hinf}
Let $1 < p < \infty$. Then $A \in \mathcal {RH}^\infty(L^p(\Omega)^3)$ and $\phi_{A}^{\mathcal R \infty} < \frac{\pi}{2}$.
In particular, $A$ is injective. Moreover, if $n_2 \neq 0$ then $0 \in \rho(A)$.
\end{theorem}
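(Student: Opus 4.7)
My plan is to first establish the $\mathcal R$-bounded $\mathcal H^\infty$-calculus on the simpler reference setting of the periodic cylinder $\R^{n_1+n_3}\times(0,2\pi)^{n_2}$ with fully periodic boundary conditions, where operator-valued Fourier multiplier techniques apply, and then transfer the result to the actual domain $\Omega$ with Dirichlet boundary conditions by odd reflection. Injectivity and, when $n_2\neq 0$, invertibility of $A$ will then follow from abstract principles together with a spectral-gap argument.

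To analyze the symbol, I would first study the constant matrix $M$ in isolation. A direct computation yields the characteristic polynomial $\lambda^3-\lambda^2+2\lambda-1$, whose unique real root lies in $(1/2,1)$ and whose complex conjugate pair has strictly positive real part, so $\sigma(M)\subset\Sigma_{\phi_M}$ for some $\phi_M<\pi/2$ and $\det M=1\neq 0$. Passing to Fourier variables on $\R^{n_1+n_3}\times\Z^{n_2}$, the operator $A(D)=-M\Delta$ corresponds to the matrix symbol $|\xi|^2 M$. For every $\theta\in(\phi_M,\pi/2)$ and every $f\in H^\infty(\Sigma_\theta)$ the symbol $m(\xi):=f(|\xi|^2 M)$ is well defined with $\|m(\xi)\|_{\mathcal L(\C^3)}\le C\|f\|_\infty$, and the same estimate holds for all Mikhlin-type derivatives $\xi^\alpha \partial^\alpha m(\xi)$ since the chain rule applied to $s\mapsto f(sM)$ shows that they are again of the form $g_\alpha(|\xi|^2 M)$. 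Since any norm-bounded subset of the finite-dimensional space $\mathcal L(\C^3)$ is automatically $\mathcal R$-bounded, an operator-valued Fourier multiplier theorem combining a Mikhlin statement on $\R^{n_1+n_3}$ with a Marcinkiewicz statement on $\Z^{n_2}$, applied on the UMD space $L^p$ of the cylinder, delivers the $\mathcal R$-bounded $\mathcal H^\infty$-calculus for the periodic realization with angle strictly smaller than $\pi/2$.

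To transfer the calculus to $\Omega$, the key observation is that the boundary conditions in \eqref{Real} are imposed on each component $U_j$ separately and that $A(D)$ couples the components only through the constant matrix $M$, the differential part acting diagonally as $-\Delta$. Consequently, odd reflection across each side $\{x_k=0,\pi\}$ of the $(0,\pi)^{n_2}$-factor and across each of the $n_3$ hyperplanes $\{x_k=0\}$ commutes with $A(D)$; restricting the calculus from the periodic cylinder to the corresponding closed subspace of odd functions transports the $\mathcal R$-bounded $\mathcal H^\infty$-calculus of angle $<\pi/2$ to $A$ on $L^p(\Omega)^3$. Injectivity of $A$ then follows from the boundedness of the calculus on the reflexive UMD space. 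Finally, if $n_2\neq 0$, the Dirichlet Laplacian on the bounded factor $(0,\pi)^{n_2}$ has lowest eigenvalue $\geq 1$, so on the Fourier side $|\xi|^2\geq c>0$ uniformly; combined with $\det M\neq 0$ this gives a uniformly bounded inverse symbol and hence $0\in\rho(A)$.

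The main obstacle I anticipate is locating (or developing) the precise operator-valued Fourier multiplier theorem on the mixed space $\R^{n_1+n_3}\times\Z^{n_2}$ in its $\mathcal R$-bounded formulation suited to the $\mathcal H^\infty$-calculus; once this ingredient is in place, the matrix-valued nature of the symbol is handled for free because $\mathcal L(\C^3)$ is finite-dimensional, and the remaining steps (spectral analysis of $M$, reflection, spectral gap) are of a routine algebraic or analytic nature.
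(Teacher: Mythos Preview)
Your overall strategy matches the paper's proof closely: both establish the $\mathcal R$-bounded $\mathcal H^\infty$-calculus first on the periodic cylinder $\R^{n_1+n_3}\times(0,2\pi)^{n_2}$ via a combined continuous/discrete Fourier multiplier theorem (this is precisely Theorem~\ref{Michlin} in the paper, answering the obstacle you anticipate), and then transfer to $\Omega$ by odd reflection across the hyperplanes bounding the half-space and cube factors.

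There is, however, a genuine gap in your injectivity step. The claim that ``injectivity of $A$ follows from the boundedness of the calculus on the reflexive UMD space'' is false as a general principle: the zero operator on any reflexive space is pseudo-sectorial with a trivially bounded $\mathcal H^\infty_0$-calculus, yet is not injective. What the reflection argument actually delivers is only $A\in\Psi\mathcal{RH}^\infty(L^p(\Omega)^3)$; injectivity must be argued separately. The paper does this in two cases. For $n_2\neq 0$ it shows $0\in\rho(A)$ directly, essentially your spectral-gap argument---the crucial observation being that odd reflection lands in the mean-zero subspace $L^p_{(0)}$, so the $k=0$ Fourier mode is absent and the symbol is uniformly invertible. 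For $n_2=0$ it uses the resolvent estimate in the form $\|\partial^\beta u\|_p\le C\lambda\|u\|_p$ for $|\beta|=2$, obtained from $(\lambda+A)u=\lambda u$ and~\eqref{R_bounds}, and lets $\lambda\to 0^+$ to force $u=0$; the half-space directions are then handled through the extension relation~\eqref{Ext_Delta}.

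A smaller point: your chain-rule justification of the Mikhlin bound is imprecise as written, since $\xi_j\partial_{\xi_j}f(|\xi|^2M)=2\xi_j^2 M f'(|\xi|^2M)$ is not literally of the form $g_\alpha(|\xi|^2M)$. The desired bound does hold, because Cauchy's estimate on a sector gives $|zf'(z)|\le C_\theta\|f\|_\infty$ for $z$ in any strictly smaller sector containing $|\xi|^2\sigma(M)$; the paper instead differentiates under the Dunford integral representation of $h_f$ and invokes \cite[Theorem~5.5]{Denk-2003}.
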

By \cite[Theorem 4.2]{Weis-2001} we obtain the following implication on maximal $L^p$-regularity, $1 < p  <\infty$, for the Cauchy problem 
\begin{equation}\label{CP}
\begin{aligned}
	\dot U + AU & = F \quad \text{on } (0,T),\\
	U(0) & = U_0,
\end{aligned}
\end{equation}
obtained from the initial boundary value problem \eqref{TEP_lin} of the linearized thermo\-elastic plate equation.
In what follows 
\[
	I_p(A) := \{U_0 \in L^p(\Omega)^3: \exists U \in W^{1,p}((0,T),L^p(\Omega)^3) \cap L^p((0,T),D(A)): U_0 = U(0) \}
\]
supplemented with the norm
\[
	\|U_0\|_{I_p(A)} := \inf\{\|U\|_{W^{1,p}((0,T),L^p(\Omega)^3) \cap L^p((0,T),D(A))} : U_0 = U(0)\}
\]
denotes the space of all possible traces at time zero.
\begin{corollary}\label{Cor_MR}
In the situation of Theorem \ref{Hinf} the realization $A$ has the property of maximal $L^p$-regularity:
For every $T \in (0,\infty)$, every $F \in L^p((0,T),L^p(\Omega)^3)$ and every $U_0 \in I_p(A)$ %
problem \eqref{CP} associated with \eqref{TEP_lin_first_order} has a unique solution $U \in W^{1,p}((0,T),L^p(\Omega)^3) \cap L^p((0,T),D(A))$
and there exists a $C >0$ independent of $F$ and $U_0$ such that
\[
	\|U\|_{W^{1,p}((0,T),L^p(\Omega)^3) \cap L^p((0,T),D(A))}
	\leq C \bigl(\|F\|_{L^p((0,T),L^p(\Omega)^3)} + \norm{U_0}_{I_p(A)} \bigr).
\]
Moreover if $n_2 \neq 0$, %
the assertion remains valid for $T =\infty$.
\end{corollary}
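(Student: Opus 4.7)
The plan is to derive the Corollary directly from Theorem~\ref{Hinf} via Weis's characterization of maximal $L^p$-regularity, and then to reduce the case of nontrivial initial data to the case $U_0 = 0$ by subtracting an extension.

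First I would observe that Theorem~\ref{Hinf} gives $A \in \mathcal{RH}^\infty(L^p(\Omega)^3)$ with $\mathcal R\mathcal H^\infty$-angle strictly less than $\pi/2$; in particular $A$ is $\mathcal R$-sectorial of angle $<\pi/2$. By \cite[Theorem~4.2]{Weis-2001}, this is equivalent to maximal $L^p$-regularity of $-A$ on any finite interval $(0,T)$ for the homogeneous Cauchy problem. That is, for every $G \in L^p((0,T),L^p(\Omega)^3)$ there is a unique $V \in W^{1,p}((0,T),L^p(\Omega)^3) \cap L^p((0,T),D(A))$ solving $\dot V + A V = G$, $V(0)=0$, together with the estimate
\[
  \norm{V}_{W^{1,p}((0,T),L^p(\Omega)^3)\cap L^p((0,T),D(A))} \leq C(T)\,\norm{G}_{L^p((0,T),L^p(\Omega)^3)}.
\]

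Next I would treat the general initial condition $U_0 \in I_p(A)$. By definition of $I_p(A)$ and its norm, for each $\varepsilon>0$ there exists $W_\varepsilon \in W^{1,p}((0,T),L^p(\Omega)^3)\cap L^p((0,T),D(A))$ with $W_\varepsilon(0)=U_0$ and $\norm{W_\varepsilon}_{\cdots} \leq \norm{U_0}_{I_p(A)}+\varepsilon$. Setting $V := U - W_\varepsilon$ reduces the problem to the homogeneous initial condition case with right-hand side $F-\dot W_\varepsilon - A W_\varepsilon \in L^p((0,T),L^p(\Omega)^3)$, whose norm is controlled by $\norm{F}_{L^p((0,T),L^p(\Omega)^3)} + C'\norm{W_\varepsilon}_{\cdots}$. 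Combining the homogeneous estimate with the triangle inequality and letting $\varepsilon \to 0$ yields existence and the stated a priori bound; uniqueness follows since the difference of two solutions satisfies the homogeneous problem with $F=0$, $U_0=0$, whose only solution is zero.

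Finally, for the case $n_2 \neq 0$ and $T = \infty$, I would use the additional information from Theorem~\ref{Hinf} that $0 \in \rho(A)$. Together with the $\mathcal R$-sectoriality of angle $<\pi/2$, this means $A$ is invertible and $\mathcal R$-sectorial on all of $\mathbb{C}\setminus\{0\}$ in a sector strictly larger than the right half-plane; hence Weis's theorem applies on the half-line $(0,\infty)$ (see the standard formulation, e.g.\ in \cite{Weis-2001}), giving the constant $C$ independent of $T$. The inhomogeneous initial data are handled exactly as before, using an extension $W_\varepsilon$ that decays, e.g.\ $W_\varepsilon(t) := e^{-tA}U_0$, whose norm on $(0,\infty)$ is comparable to $\norm{U_0}_{I_p(A)}$ thanks to the boundedness of the analytic semigroup generated by $-A$ on $L^p(\Omega)^3$. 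The only genuine subtlety is ensuring the uniformity of the estimate in $T$, which is precisely what the invertibility $0 \in \rho(A)$ buys us; without it, one would only obtain a constant depending on $T$ through a shift $A + \omega$.
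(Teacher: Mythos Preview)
Your proposal is correct and follows essentially the same approach as the paper: the paper does not give a separate proof of this Corollary, but simply cites \cite[Theorem~4.2]{Weis-2001} immediately before stating it, deriving maximal $L^p$-regularity from the $\mathcal R$-sectoriality of angle less than $\pi/2$ established in Theorem~\ref{Hinf}, with the $T=\infty$ case following from $0\in\rho(A)$. Your additional details on handling nonzero initial data via the trace space $I_p(A)$ and on the half-line case are standard and correctly spelled out.
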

With the help of maximal $L^p$-regularity we prove existence and uniqueness of strong solutions to the non-linear problem \eqref{TEP_first_order}.
For the remainder of this section let $n := n_1 + n_2 + n_3$.
\begin{theorem}\label{Non-lin}
Let $n + 2 < p < \infty$.

a)\ For given $U_0 \in I_p(A)$ there exists $T_0 \in (0,\infty)$ such that problem \eqref{TEP_first_order}  
has a unique solution $ U \in W^{1,p}((0,T),L^p(\Omega)^3) \cap L^p\big((0,T),(W^{2,p}(\Omega) \cap  W^{1,p}_0(\Omega))^3\big)$ for each $T \in (0,T_0)$.

b)\ Accordingly, for every $T \in (0,\infty)$ there exists $d > 0$ such that \eqref{TEP_first_order} 
has a unique solution $ U \in W^{1,p}((0,T),L^p(\Omega)^3) \cap L^p\big((0,T),(W^{2,p}(\Omega) \cap  W^{1,p}_0(\Omega))^3\big)$ provided $\|U_0\|_{I_p(A)} \leq d$.

In both cases the solution $U$ depends analytically on $U_0$.
\end{theorem}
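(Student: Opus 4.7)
My plan is to recast \eqref{TEP_first_order} as a fixed point problem in the maximal regularity space of Corollary~\ref{Cor_MR} and apply Banach's contraction principle; analytic dependence on $U_0$ will then follow from the parameter-dependent contraction principle, since $\Phi$ is polynomial in $U$. Set $E_0(T) := L^p((0,T), L^p(\Omega)^3)$ and $E_1(T) := W^{1,p}((0,T), L^p(\Omega)^3) \cap L^p((0,T), D(A))$. By Corollary~\ref{Cor_MR}, the map $\mathcal{L}: U \mapsto (\dot U + A U, U(0))$ is a topological isomorphism $E_1(T) \to E_0(T) \times I_p(A)$. Writing $U = U^{\mathrm{lin}} + W$, where $U^{\mathrm{lin}} \in E_1(T)$ solves the homogeneous problem with datum $U_0$, the task reduces to finding $W$ in the closed subspace $E_1^0(T) := \{W \in E_1(T) : W(0) = 0\}$ satisfying $\mathcal{L} W = (\Phi(U^{\mathrm{lin}} + W), 0)$.

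The crucial step is to show that $\Phi$ is polynomial of degree three and locally Lipschitz as a map $E_1(T) \to E_0(T)$. From trace theory and Sobolev embedding I have
\[
E_1(T) \hookrightarrow C([0,T]; B^{2 - 2/p}_{p,p}(\Omega)^3) \hookrightarrow C([0,T]; C^1(\overline{\Omega})^3),
\]
the latter valid precisely because $p > n + 2$. Expanding $\Delta(U_1^3) = 3 U_1^2 \Delta U_1 + 6 U_1 |\nabla U_1|^2$ and distributing one factor in $L^\infty_{t,x}$ while keeping the second derivatives in $L^p_{t,x}$, I obtain
\[
\|\Phi(U) - \Phi(V)\|_{E_0(T)} \leq C \bigl(\|U\|_{E_1(T)} + \|V\|_{E_1(T)}\bigr)^2 \|U - V\|_{E_1(T)},
\]
and, crucially, the embedding constant can be taken independent of $T \in (0, T_1]$ when restricted to $E_1^0(T)$, so that the Lipschitz constant on a fixed ball in $E_1^0(T)$ shrinks as $T \to 0$.

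With this at hand, for part a) I fix a ball $B_R \subset E_1^0(T)$ and study $K_{U_0}: W \mapsto \mathcal{L}^{-1}(\Phi(U^{\mathrm{lin}} + W), 0)$, which lands in $E_1^0(T)$ because of the vanishing initial value. For $T_0 > 0$ sufficiently small, $K_{U_0}$ maps $B_R$ into itself and is a strict contraction, yielding the unique local solution on every $[0,T]$ with $T < T_0$. For part b) I fix $T \in (0, \infty)$ and run the same argument around $W = 0$; since $\Phi(U^{\mathrm{lin}})$ scales like $\|U_0\|_{I_p(A)}^3$ and $D\Phi(U^{\mathrm{lin}})$ like $\|U_0\|_{I_p(A)}^2$, a smallness condition $\|U_0\|_{I_p(A)} \leq d$ (with $d = d(T)$) suffices to obtain a self-map and a contraction. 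Analytic dependence follows by applying the analytic implicit function theorem to $F(U_0, W) := W - K_{U_0}(W)$: this map is real analytic because $\Phi$ is polynomial and $\mathcal{L}^{-1}$ linear, and $D_W F$ at the fixed point is invertible since $\|D_W K_{U_0}\| < 1$ by construction.

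The main obstacle, and the only non-routine step, is controlling the non-linearity $a\Delta(\Delta u)^3 = a\Delta(U_1^3)$: after the Leibniz expansion the cubic terms $U_1^2 \Delta U_1$ and $U_1 |\nabla U_1|^2$ require $L^\infty((0,T) \times \Omega)$ control of both $U_1$ and $\nabla U_1$, and it is precisely this that forces the restriction $p > n+2$, since $B^{2-2/p}_{p,p}(\Omega) \hookrightarrow C^1(\overline{\Omega})$ holds iff $2 - 2/p > 1 + n/p$. Once this pointwise control is secured, all other estimates reduce to straightforward manipulations in the maximal regularity norm, and the contraction principle closes the argument.
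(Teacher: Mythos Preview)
Your proposal is correct and follows essentially the same strategy as the paper: reduce to a fixed point problem in the trace-zero maximal regularity space via the splitting $U=U^{\mathrm{lin}}+W$, use the uniform-in-$T$ bound on $L_T^{-1}$ restricted to $E_1^0(T)$, and obtain analytic dependence from the real-analytic implicit function theorem applied to $F(U_0,W)=W-K_{U_0}(W)$.

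The only noteworthy difference is the embedding used to place $U_1,\nabla U_1$ into $L^\infty_{t,x}$. You go through the trace embedding $E_1(T)\hookrightarrow C([0,T];B^{2-2/p}_{p,p}(\Omega)^3)\hookrightarrow C([0,T];C^1(\overline\Omega)^3)$, whereas the paper interpolates in space-time, $E_1(T)\hookrightarrow W^s_p((0,T),W^{2(1-s)}_p(\Omega))$, and then embeds into $C_b((0,T),C_b(\Omega))$ for suitable $s$. Both routes are standard, both use exactly the hypothesis $p>n+2$, and both deliver the same $L^\infty$ control needed after the Leibniz expansion of $\Delta(U_1^3)$. One small imprecision in your write-up: with $T$-independent embedding constants on $E_1^0(T)$ the Lipschitz constant of $\Phi$ on a fixed ball does \emph{not} shrink as $T\to 0$; what shrinks is $\|U^{\mathrm{lin}}|_{[0,T]}\|_{E_1(T)}$ (by dominated convergence), which is what actually makes $K_{U_0}$ a contraction on a small ball. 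The paper makes this point explicitly; your argument is correct once this is stated accurately.
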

Again with the help of maximal $L^p$-regularity we prove that the solution we have found in Theorem \ref{Non-lin} is real-analytic.
\begin{theorem}\label{Analytic}
Let $n + 2 < p < \infty$. Then the solution $U = U(t,x)$ established in Theorem~\ref{Non-lin} is real-analytic, i.e., $U \in C^{\omega}((0, T) \times \Omega,\mathbb R^3)$.
\end{theorem}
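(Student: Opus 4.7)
The plan is to invoke the parameter trick of Escher, Prüss and Simonett in combination with the analytic implicit function theorem, localising around any given interior point $(t_0, x_0) \in (0, T) \times \Omega$. Set $\mathbb{E}(T') := W^{1,p}((0,T'), L^p(\Omega)^3) \cap L^p((0,T'), D(A))$. Since $p > n+2$, one has the Sobolev embedding $\mathbb{E}(T') \hookrightarrow C([0,T'] \times \overline{\Omega}; \mathbb{R}^3)$, which together with the analogous embedding for first spatial derivatives makes $V \mapsto \Phi(V) = -a\Delta(0, V_1^3, 0)^T$ a polynomial --- hence entire-analytic --- map from $\mathbb{E}(T')$ into $L^p((0,T'), L^p(\Omega)^3)$.

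Pick $r > 0$ with $\overline{B(x_0, r)} \subset \Omega$ together with $\chi \in C_c^\infty(\Omega)$ equal to $1$ on $B(x_0, r/2)$ and supported in $B(x_0, r)$. For parameters $\lambda = (\mu, \eta) \in \mathbb{R} \times \mathbb{R}^n$ in a small neighbourhood of $(1, 0)$, the map $\phi_\eta(x) := x + \chi(x)\eta$ is a diffeomorphism of $\Omega$ which agrees with the translation $y \mapsto y + \eta$ on $B(x_0, r/2)$ and with the identity outside $B(x_0, r)$ --- in particular on $\partial \Omega$. Choose $T' \in (t_0, T)$ and define $V_\lambda(t, x) := U(\mu t, \phi_\eta(x))$. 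A chain-rule computation shows $V_\lambda \in \mathbb{E}(T')$ and
\[
(V_\lambda)_t + \mu A^\eta V_\lambda = \mu \Phi^\eta(V_\lambda), \qquad V_\lambda|_{t=0} = U_0 \circ \phi_\eta,
\]
where $A^\eta$ is an elliptic second-order matrix differential operator whose coefficients depend real-analytically on $\eta$ and smoothly on $x$, with $A^0 = A$, and $\Phi^\eta$ is a pulled-back polynomial nonlinearity with analytic $\eta$-dependence. The Dirichlet boundary condition is automatically preserved since $\phi_\eta = \mathrm{id}$ near $\partial \Omega$.

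Consider the map
\[
\mathcal{F}(\mu, \eta, V) := \bigl( V_t + \mu A^\eta V - \mu \Phi^\eta(V),\ V(0, \cdot) - U_0 \circ \phi_\eta \bigr)
\]
from $\mathbb{R} \times \mathbb{R}^n \times \mathbb{E}(T')$ to $L^p((0,T'), L^p(\Omega)^3) \times I_p(A)$. It is jointly real-analytic on a neighbourhood of $(1, 0, U|_{(0,T')})$ and vanishes there, and its $V$-derivative at the reference point is the linear operator $\mathcal{L} V := (V_t + AV - \Phi'(U) V,\, V(0, \cdot))$. Assuming $\mathcal{L}$ is an isomorphism onto its codomain, the analytic implicit function theorem produces a real-analytic map $\lambda \mapsto \widetilde V_\lambda \in \mathbb{E}(T')$ on a neighbourhood of $(1, 0)$ with $\mathcal{F}(\lambda, \widetilde V_\lambda) = 0$; uniqueness identifies $\widetilde V_\lambda$ with $V_\lambda$. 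Composing with the continuous evaluation $V \mapsto V(t_0, x_0)$ then gives that $(\mu, \eta) \mapsto V_\lambda(t_0, x_0) = U(\mu t_0, x_0 + \eta)$ is real-analytic near $(1, 0)$, which via Cauchy estimates yields
\[
\bigl| \partial_t^k \partial_x^\alpha U(t_0, x_0) \bigr| \leq C\, \rho^{-(k + |\alpha|)}\, (k + |\alpha|)!
\]
for some $C, \rho > 0$ and all $(k, \alpha) \in \mathbb{N}_0 \times \mathbb{N}_0^n$; since $(t_0, x_0)$ was arbitrary, $U \in C^\omega((0,T) \times \Omega; \mathbb{R}^3)$.

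The main obstacle is the isomorphism property of $\mathcal{L}$: because $\Phi'(U) V = -a \Delta(0, 3 U_1^2 V_1, 0)^T$ is of the same order as $A$, the operator $\Phi'(U)$ is \emph{not} a lower-order perturbation, so maximal $L^p$-regularity for $\mathcal{L}$ does not follow from generic perturbation theorems. The resolution is that $\mathcal{L}$ is precisely the linearisation implicit in the Banach fixed-point construction of Theorem \ref{Non-lin}: the product-rule expansion of $\Delta(U_1^2 V_1)$ combined with the $C^0$-control on $U$ and its first derivatives (again from $\mathbb{E}(T') \hookrightarrow C([0,T'] \times \overline\Omega; \mathbb{R}^3)$), and a short-time iteration across $(0, T')$ using the uniform maximal regularity estimate of Corollary \ref{Cor_MR}, reproduce the same quadratic Lipschitz bound on $\Phi$ that drove the contraction and thereby furnish invertibility of $\mathcal{L}$ together with a uniform bound on $\mathcal{L}^{-1}$.
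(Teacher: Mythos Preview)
Your approach follows the same parameter-trick framework as the paper (the Escher--Pr\"uss--Simonett method combined with the analytic implicit function theorem), but there is a genuine gap in your choice of coordinate transformation. You take $V_\lambda(t,x) = U(\mu t, \phi_\eta(x))$ with the space deformation $\phi_\eta$ independent of $t$; this forces the initial value to become $U_0 \circ \phi_\eta$, which you then build into $\mathcal{F}$. The problem is that $\eta \mapsto U_0 \circ \phi_\eta$ is in general \emph{not} real-analytic as a map into $I_p(A)$ (nor even into $L^p(\Omega)^3$): already $C^k$-smoothness of $\eta \mapsto f \circ \phi_\eta$ in $L^p$ requires roughly $f \in W^{k,p}$, and real-analyticity would demand convergent Taylor expansions in norm --- essentially asking $U_0$ itself to be analytic, which is precisely what you want to prove. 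Hence the claim that ``$\mathcal{F}$ is jointly real-analytic'' is unjustified at this point, and the implicit function theorem cannot be applied.

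The paper avoids this by coupling the space deformation to a time cut-off: it uses $\Theta_{\lambda,\mu}(t,x) = (t + \lambda\zeta(t),\, x + \mu\zeta(t)\xi(x))$ with $\zeta \in C_c^\infty((0,T_0))$ supported away from $t=0$. Then $\Theta_{\lambda,\mu}(0,x) = (0,x)$ for all $(\lambda,\mu)$, so $U_{\lambda,\mu}(0,\cdot) = U_0$ is independent of the parameters and the initial-value component of the nonlinear map is simply $W(0) - U_0$, trivially analytic in $(\lambda,\mu)$. Your treatment of the isomorphism property of $\mathcal{L}$ via the contraction estimate $\norm{\Phi'(U)} \le (2M)^{-1}$ and the von Neumann series is the same idea the paper uses and is correct on the small interval produced in Theorem~\ref{Non-lin}; the ``short-time iteration across $(0,T')$'' you allude to is not needed once you observe, as the paper does, that real-analyticity is a local property and may be established directly on that existence interval.
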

Finally, we transfer the results on \big(\eqref{TEP_first_order} - \eqref{TEP_lin_first_order}\big) to the systems \big(\eqref{TEP}- \eqref{TEP_lin}\big).
Here we assume $n_2 \neq 0$ which ensures $0 \in \rho(A)$ by Theorem~\ref{Hinf}. Thus, we may assume $T = \infty$ in Corollary \ref{Cor_MR}. 
Furthermore, it will subsequently allow us to invert the transformation carried out in \eqref{Trans}. In particular, it implies $0 \in \rho(\Delta^D_p)$, where $\Delta^D_p$ denotes the $L^p(\Omega)$-realization of the Dirichlet-Laplacian with domain $D(\Delta^D_p) := W^{2,p}(\Omega) \cap  W^{1,p}_0(\Omega)$ (cf.~ \cite{Nau-2012} or \cite{Nau-Diss-2012}). Note that $M \in \mathbb R^{3\times 3}$ is a regular matrix and that $D(A) = D(\Delta^D_p)^3$. As usual we define the square $[\Delta^D_p]^2$ of the closed operator $\Delta^D_p$ by
\[
	D([\Delta^D_p]^2) := \{ u \in D(\Delta^D_p) : \Delta^D_p u \in D(\Delta^D_p)\},
	\quad [\Delta^D_p]^2 u := \Delta^D_p ( \Delta^D_p u).
\]
Then
\[
	D([\Delta^D_p]^2) = \{ u \in W^{4,p}(\Omega) \cap  W^{1,p}_0(\Omega): \Delta u  = 0\}	
\]
and the induced operator
\[
	\Delta^D_{2,p} \colon D([\Delta^D_p]^2) \subset  D(\Delta^D_p) \to D(\Delta^D_p),\ \Delta^D_{2,p} u := \Delta^D_{p} u
\]
fulfills $0 \in \rho(\Delta^D_{2,p})$ and $(\Delta^D_{2,p})^{-1} \in \mathcal L(D(\Delta^D_p),D([\Delta^D_p]^2))$.

Moreover, $I_p(A)$ is characterized by real interpolation: we have
$I_p(A) = \big(L^p(\Omega)^3,D(A)\big)_{1-1/p,p} = \big(L^p(\Omega),D(\Delta^D_p)\big)_{1-1/p,p}^3$
(\cite[Corollary 1.14]{Lunardi-2009}).
\begin{remark}
For $\frac32 < p < \infty$ as considered below and $\Omega := (0,\pi) \times \mathbb R^{n_1}$ defining a strip more can be said on the trace space $I_p(A)$. In that case the space $B^{2-2/p}_{p,p,(0)}(\Omega) := \{u \in B^{2-2/p}_{p,p}(\Omega) : u|_{\partial \Omega} = 0\}$ is well-defined (cf.~\cite[Section~4.3.3]{Triebel-1978}) and results on half spaces and standard localization techniques can be employed to show $\big(L^p(\Omega),D(\Delta^D_p)\big)_{1-1/p,p} = B^{2-2/p}_{p,p,(0)}(\Omega)$. Here $B^{s}_{p,q}(\Omega)$ denotes the Besov space with parameters $s$, $p$, and $q$.
\end{remark}
To state our results on the systems \big(\eqref{TEP}- \eqref{TEP_lin}\big) we define the spaces
\[
	\mathcal B_p := \big(L^p(\Omega),D(\Delta^D_p)\big)_{1-1/p,p}, \quad \|w\|_{\mathcal B_p} := \|w\|_{(L^p(\Omega),D(\Delta^D_p))_{1-1/p,p}}
\]
and
\[
	\mathcal A_p := \big[\Delta_p^D\big]^{-1}\big(\mathcal B_p\big), \quad \|w\|_{\mathcal A_p} := \|\Delta w\|_{\mathcal B_p}.
\]
For convenience of the reader being merely interested in \eqref{TEP} we do not postpone the proof of our main theorem which reads as follows. 
\begin{theorem}\label{Re_Trans}
a)\ Let $1 < p < \infty$, let $n_2 \neq 0$, and let $T \in (0,\infty]$. Then for all $(g,h) \in L^p((0,T),L^p(\Omega))^2$ and all $(u_0,u_1,\theta_0) \in \mathcal A_p \times \mathcal B_p \times \mathcal B_p$ there exists a unique solution
\begin{eqnarray*}
	u & \in & W^{2,p}((0,T),L^p(\Omega)) \cap W^{1,p}((0,T),D(\Delta^D_p)) \cap L^p((0,T),D([\Delta^D_p]^2))\\
	\theta & \in & W^{1,p}((0,T),L^p(\Omega)) \cap L^p((0,T),D(\Delta^D_p))
\end{eqnarray*}
of the linearized problem \eqref{TEP_lin}. Moreover, there exists $C > 0$ independent of $g$, $h$, $u_0$, $u_1$, and $\theta_0$ such that
\begin{equation}\label{TEP_MR}
\begin{aligned}
\|u & \|_{W^{2,p}((0,T),L^p(\Omega))}   + \|u\|_{W^{1,p}((0,T),W^{2,p}(\Omega))} + \|u\|_{L^p((0,T),W^{4,p}(\Omega))}\\
	& \quad + \|\theta \|_{W^{1,p}((0,T),L^p(\Omega))}  + \|\theta\|_{L^p((0,T),W^{2,p}(\Omega))}\\
	& \quad \qquad \qquad \leq  C \big(\|(g,h)\|_{L^p((0,T),L^p(\Omega))^2} + \|(u_0,u_1,\theta_0)\|_{\mathcal A_p \times \mathcal B_p\times \mathcal B_p}\big).
\end{aligned}
\end{equation}
b)\ Assume both $n + 2 < p < \infty$ and $n_2 \neq 0$ and let $(u_0,u_1,\theta_0) \in \mathcal A_p \times \mathcal B_p \times \mathcal B_p$.

Then there exists $T_0 \in (0,\infty)$ such that problem \eqref{TEP} has a unique solution
\begin{eqnarray*}
	u & \in & W^{2,p}((0,T),L^p(\Omega)) \cap W^{1,p}((0,T),D(\Delta^D_p)) \cap L^p((0,T),D([\Delta^D_p]^2))\\
	\theta & \in & W^{1,p}((0,T),L^p(\Omega)) \cap L^p((0,T),D(\Delta^D_p))
\end{eqnarray*}
for each $T \in (0,T_0)$.

Accordingly, for each $T \in (0,\infty)$ there exists $d > 0$ such that problem \eqref{TEP} has a unique solution
\begin{eqnarray*}
	u & \in & W^{2,p}((0,T),L^p(\Omega)) \cap W^{1,p}((0,T),D(\Delta^D_p)) \cap L^p((0,T),D([\Delta^D_p]^2))\\
	\theta & \in & W^{1,p}((0,T),L^p(\Omega)) \cap L^p((0,T),D(\Delta^D_p))
\end{eqnarray*}
provided $\|(u_0,u_1,\theta_0)\|_{\mathcal A_p \times \mathcal B_p \times \mathcal B_p} < d$.

Moreover, in both cases $u, \theta \in C^{\omega}((0, T)\times \Omega,\mathbb R)$ are real-analytic and depend real-ana\-lyti\-cal\-ly on $u_0$, $u_1$, and $\theta_0$. 
\end{theorem}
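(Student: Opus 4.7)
The strategy is to reduce Theorem~\ref{Re_Trans} to the corresponding results on the first-order system, Corollary~\ref{Cor_MR} for part~(a) and Theorems~\ref{Non-lin} and \ref{Analytic} for part~(b), by inverting the substitution $U=(\Delta u,u_t,\theta)^T$ from \eqref{Trans}. The assumption $n_2\neq 0$ enters exactly as advertised in the paragraph preceding the statement: via Theorem~\ref{Hinf} it forces $0\in\rho(\Delta^D_p)$ and provides $(\Delta^D_{2,p})^{-1}\in\mathcal{L}(D(\Delta^D_p),D([\Delta^D_p]^2))$, so that $u$ can be recovered unambiguously from $U_1$ as $u:=(\Delta^D_p)^{-1}U_1$, while $\theta:=U_3$. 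It also lets us take $T=\infty$ throughout.

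For part~(a), given $(u_0,u_1,\theta_0)\in\mathcal{A}_p\times\mathcal{B}_p\times\mathcal{B}_p$ I set $\tilde u_0:=\Delta u_0\in\mathcal{B}_p$, so that $U_0:=(\tilde u_0,u_1,\theta_0)^T\in\mathcal{B}_p^3=I_p(A)$, and apply Corollary~\ref{Cor_MR} to get a unique $U\in W^{1,p}((0,T),L^p(\Omega)^3)\cap L^p((0,T),D(A))$ solving \eqref{TEP_lin_first_order} with $F=(0,g,h)^T$. Defining $u:=(\Delta^D_p)^{-1}U_1$ and $\theta:=U_3$, the first row of \eqref{Def_M} reads $U_{1,t}=\Delta U_2$; applying $(\Delta^D_p)^{-1}$ and using the initial trace $U_1(0)=\Delta u_0$ simultaneously gives $u_t=U_2$ and $u|_{t=0}=u_0$, while the second and third rows reproduce the first two lines of \eqref{TEP_lin}. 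The boundary conditions $u=\Delta u=\theta=0$ on $\partial\Omega$ are automatic from $u(t)\in D([\Delta^D_p]^2)$ and $\theta(t)\in D(\Delta^D_p)$. The regularity claim for $(u,\theta)$ and the estimate \eqref{TEP_MR} then follow by combining the maximal regularity bound of Corollary~\ref{Cor_MR} with the boundedness of $(\Delta^D_{2,p})^{-1}$ and the embeddings $D(\Delta^D_p)\hookrightarrow W^{2,p}(\Omega)$, $D([\Delta^D_p]^2)\hookrightarrow W^{4,p}(\Omega)$. Uniqueness is inherited in the reverse direction: any $(u,\theta)$ in the stated class produces, via \eqref{Trans}, a solution $U$ of \eqref{TEP_lin_first_order} in the maximal regularity class, which is unique.

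Part~(b) is proved by the same transfer after replacing Corollary~\ref{Cor_MR} with Theorems~\ref{Non-lin} and \ref{Analytic}: the identity $U_1=\Delta u$ turns $\Phi(U)=-a\Delta(0,\phi(U_1),0)^T$ into the nonlinear term $a\Delta(\Delta u)^3$ of \eqref{TEP}, so the fixed-point solution for the first-order problem yields a solution of \eqref{TEP} in the required regularity classes both locally in time and globally for small data. Real-analyticity of $\theta=U_3$ is immediate from Theorem~\ref{Analytic}, and that of $u$ follows from $\Delta u=U_1\in C^\omega((0,T)\times\Omega)$ via the analytic hypoellipticity of $\Delta$ on interior open sets; analytic dependence of $(u,\theta)$ on the initial data is inherited from Theorem~\ref{Non-lin} through the bounded linear maps $(u_0,u_1,\theta_0)\mapsto U_0$ and $U\mapsto((\Delta^D_p)^{-1}U_1,U_3)$. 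The only non-routine point in the whole argument is verifying that the inverse substitution faithfully restores the original Dirichlet condition $u=0$ on $\partial\Omega$ (rather than the transformed $u_t=0$ from \eqref{TEP_lin_add_2}) and the original initial condition $u|_{t=0}=u_0$ (rather than $\Delta u|_{t=0}=\tilde u_0$); this is precisely the step where the invertibility of $\Delta^D_p$ from the hypothesis $n_2\neq 0$ is indispensable, promoting the reformulation \eqref{TEP_lin_add_2} from an a priori weaker problem to an equivalent one.
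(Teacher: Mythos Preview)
Your argument follows the same route as the paper: transfer the data to the first-order system via $U_0=(\Delta u_0,u_1,\theta_0)$, invoke Corollary~\ref{Cor_MR} (resp.\ Theorems~\ref{Non-lin} and~\ref{Analytic}), and then undo the substitution by setting $u:=(\Delta^D_p)^{-1}U_1$, $\theta:=U_3$, using $0\in\rho(\Delta^D_p)$ from $n_2\neq 0$ to recover the original boundary and initial conditions and to push the maximal-regularity estimate through $(\Delta^D_{2,p})^{-1}$.

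One small imprecision: deducing joint real-analyticity of $u$ in $(t,x)$ from $\Delta u=U_1\in C^\omega((0,T)\times\Omega)$ by ``analytic hypoellipticity of $\Delta$'' is not quite enough as stated, since $\Delta$ acts only in $x$ and is not hypoelliptic on $(t,x)$-space (any $u(t,x)=f(t)$ has $\Delta u=0$). The fix is immediate and already implicit in your setup: you also have $u_t=U_2\in C^\omega((0,T)\times\Omega)$, so near any $(t_0,x_0)$ one writes $u(t,x)=u(t_0,x)+\int_{t_0}^t U_2(s,x)\,ds$, where the integral is jointly analytic and $u(t_0,\cdot)$ is analytic in $x$ by hypoellipticity of $\Delta_x$ applied to $\Delta u(t_0,\cdot)=U_1(t_0,\cdot)$. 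The paper's proof is equally terse on this point, simply citing ``the definition $u:=(\Delta^D_p)^{-1}v_1$''.
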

\begin{proof}
a)\ Let $U_0 := (\Delta u_0,u_1,\theta_0)$ and $F := (0,g,h)$. By Corollary \ref{Cor_MR}
there exists a unique solution $ U \in W^{1,p}((0,T),L^p(\Omega)^3) \cap L^p\big((0,T),(W^{2,p}(\Omega) \cap  W^{1,p}_0(\Omega))^3\big)$ of \eqref{TEP_lin_first_order}. Let $U = (v_1,v_2,v_3)$. We evaluate the first line in \eqref{TEP_lin_first_order} to the result
\[
	\partial_t v_1 = \Delta v_2 \quad \text{in} \quad L^p((0,T),L^p(\Omega)).
\]
Since  $(\Delta^D_p)^{-1} \in \mathcal L ( L^p(\Omega) )$ commutes with the time derivative $\partial_t$ this implies $\partial_t \big((\Delta^D_p)^{-1}v_1\big) = v_2$.
With $v_1 \in W^{1,p}((0,T),L^p(\Omega)) \cap L^p((0,T),D(\Delta^D_p))$ we define $u := (\Delta^D_p)^{-1}v_1$ and obtain
$u \in W^{1,p}((0,T),D(\Delta^D_p)) \cap L^p((0,T),D([\Delta^D_p)]^2)$.
Thanks to $\partial_t v_1 = \Delta v_2$ we have
$\partial_{tt} u = \partial_{tt} (\Delta^D_p)^{-1}v_1 = \partial_t (\Delta^D_p)^{-1}\partial_t v_1 = \partial_t v_2$. Altogether we have found
\[
	u \in W^{2,p}((0,T),L^p(\Omega)) \cap W^{1,p}((0,T),D(\Delta^D_p)) \cap L^p((0,T),D([\Delta^D_p]^2))
\]
and $(\Delta^D_p)^{-1} \in \mathcal L\big(L^p(\Omega), W^{2,p}(\Omega)\big)$ and $(\Delta^D_{2,p})^{-1} \in \mathcal L(D(\Delta^D_p),D([\Delta^D_p]^2))$ yields \eqref{TEP_MR}.
Plugging in we deduce that $u$ and $\theta := v_3$ fulfill lines 1~and~2 of \eqref{TEP_lin} as well as the conditions prescribed in \eqref{TEP_lin_add_2}. Additionally, $u \in L^p((0,T),D([\Delta^D_p]^2))$ implies $\Delta u = u = 0$ in $L^p((0,T),L^p( \partial \Omega))$. Furthermore,  from the definition $u := (\Delta^D_p)^{-1}v_1$ we deduce
\[
	u|_{t = 0} = (\Delta^D_p)^{-1}v_1|_{t = 0} = (\Delta^D_p)^{-1} (v_1|_{t = 0}) = u_0
\]
and
\[
	\partial_t u|_{t = 0} = (\Delta^D_p)^{-1}\partial_t v_1|_{t = 0} = v_2|_{t = 0} = u_1.
\]
Recall that taking the trace as well as partial derivation with respect to $t$ commutes with $(\Delta^D_p)^{-1}$. 
Thus, $u$ and $\theta$ define a solution of \eqref{TEP_lin} and uniqueness follows from the uniqueness assertion on $U$ as given in Corollary \ref{Cor_MR}.\\
b)\ We replace $F$ by $\Phi(U)$. Along the same lines as in part a) of the proof above Theorem~\ref{Non-lin} yields unique solvability of \eqref{TEP}. The final assertion on analytic dependancy and analyticity of $u$ and $\theta$ follows from Theorem~\ref{Non-lin}, Theorem~\ref{Analytic}, and the definition $u := (\Delta^D_p)^{-1}v_1$.
\end{proof}

\section{Function spaces and classes of operators}\label{Sec_FSandCO}%

For $1 < p <\infty$ we denote the Lebesgue-Bochner spaces by $L^p(G,E)$ where $G$ is a domain and $E$ denotes a Banach space.
Let $m \in \mathbb N_0$. The $E$-valued Sobolev space $W^{m,p}(G,E)$ of order $m$ consists of all $u \in L^p(G,E)$ such that all distributional derivatives up to order $m$ define functions in $L^p(G,E)$. Since we will have to deal with periodic boundary conditions on rectangular domains $G = \mathcal Q_{n_2} \coloneqq (0,2\pi)^{n_2}$, periodic Sobolev spaces $W^{m,p}_{per}(\mathcal Q_{n_2},E)$ are of special interest. They consist of all $u \in W^{m,p}(\mathcal Q_{n_2},E)$ such that
\[
				\partial_j^\ell u|_{x_j = 0} =  \partial_j^\ell u|_{x_j = 2\pi}\quad (j = 1,\ldots,n_2;\ 0 \leq \ell < m).
\]
These traces are well-defined by continuity. Indeed, we have
\[
	W^{m,p}(\mathcal Q_{n_2},E) %
	 \hookrightarrow L^p(\mathcal Q_{n_2-1},C^{m-1}([0,2\pi],E)).
\]
By definition $W^{0,p}_{per}(\mathcal Q_{n_2},E) = L^p(\mathcal Q_{n_2},E)$. If $E = \mathbb C$ we agree to drop the additional indication in the definitions above and write as usual $L^p(G)$, for instance.
Some theorems we will apply need the UMD property or property $(\alpha)$ for which we refer to~\cite[Sections~1~and~4]{Kunstmann-Weis-2004} for details. For what follows it will be enough to know that $L^p$-spaces for $1 < p < \infty$ share both properties. %

For $0 < \gamma < 1$ and $p$ as above let $\big(L^p(G),W^{m,p}(G)\big)_{\gamma,p}$ denote the real interpolation space of $L^p(G)$ and  $W^{m,p}(G)$. Then one defines the Besov spaces with parameters $0 < s < \infty$, $1 \le p < \infty$ and $1 \le q \le \infty$ as $B^{s}_{p,q}(G) \coloneqq (L^p(G), W^{m,p}(G))_{s/m,q}$, where $m$ is the smallest integer larger than $s$, and let $W^s_p(G) := B^{s}_{p,p}(G)$. Then $\big(L^p(G),W^{m,p}(G)\big)_{\gamma,p} = W^{m\gamma}_{p}(G)$ and, in particular, $\big(L^p(G),W^{m,p}(G)\big)_{1-1/p,p} = W^{m-m/p}_{p}(G)$ (see e.g.~\cite[Section 4.3.1]{Triebel-1978}).

Given two Banach spaces $X$ and $Y$ we write $\mathcal{L}(X,Y)$ for the space of bounded, linear operators from $X$ to $Y$ and abbreviate $\mathcal{L}(X) := \mathcal{L}(X,X)$.
\begin{definition}\label{def_rbdness} 
Let $X$ and $Y$ be Banach spaces. A family $\mathcal{T} \subset \mathcal{L}(X,Y)$ is called 
{\em $\mathcal{R}$-bounded} if there exist a $C > 0$ and a $p \in [1,
\infty)$ such that for all $N \in \mathbb{N}, T_j \in \mathcal{T}, x_j
\in X$ and all independent symmetric $\{-1,1\}$-valued random variables
$\varepsilon_j$ on a probability space $(G,\mathcal{M},P)$ for $j = 1,...,N$ we have
\begin{equation}\label{DefRBeschr}
\| \sum\limits_{j=1}^{N} \varepsilon_j T_jx_j\|_{L^p(G,Y)} \leq C \|
\sum\limits_{j=1}^{N} \varepsilon_j x_j\|_{L^p(G,X)}.
\end{equation}
The smallest $C>0$ such that \eqref{DefRBeschr} is satisfied is 
called the \emph{$\mathcal R$-bound} of $\mathcal T$ and denoted by $\mathcal
R_p(\mathcal T)$.
\end{definition}
In contrast to the property of $\mathcal R$-boundedness itself, the $\mathcal R$-bound $\mathcal R_p(\mathcal T)$ is not independent of $p \in [1,\infty)$. For our purposes, however, $p$-dependancy is not important and we agree to write $\mathcal R(\mathcal T)$.
We will frequently use the fact that $\mathcal{R}$-bounds essentially behave like uniform norm bounds as formulated together with the contraction principle of Kahane in the next lemma. 
\begin{lemma}\label{HintSumme}
a) \ Let $X$, $Y$ and $Z$ be Banach spaces and let $\mathcal{T},
\mathcal{S} \subset \mathcal{L}(X,Y)$ and $\mathcal{U} \subset
\mathcal{L}(Y,Z)$ be $\mathcal{R}$-bounded. Then $\mathcal{T} +
\mathcal{S} \subset \mathcal{L}(X,Y)$, $\mathcal{T} \cup
\mathcal{S} \subset \mathcal{L}(X,Y)$, and $\mathcal{U} \mathcal{T}
\subset \mathcal{L}(X,Z)$ are $\mathcal{R}$-bounded as well and we have
$$\mathcal{R}(\mathcal{T} + \mathcal{S}), \ \mathcal{R}(\mathcal{T} \cup \mathcal{S}) \leq
\mathcal{R}(\mathcal{S})+\mathcal{R}(\mathcal{T}), \quad
\mathcal{R}(\mathcal{U}  \mathcal{T}) \leq
\mathcal{R}(\mathcal{U})\mathcal{R}(\mathcal{T}).$$
b) \
Let $p \in[1,\infty)$. Then for all $N\in\mathbb{N}, x_j\in X,
\varepsilon_j$ as in Definition~\ref{def_rbdness} 
and for all $a_j, b_j\in\mathbb{C}$ with
$|a_j|\leq|b_j|$ for $j=1,\dots,N$ we have
\begin{equation}
\|\sum_{j=1}^N a_j\varepsilon_jx_j\|_{L^p(G,X)}\leq
2\|\sum_{j=1}^Nb_j\varepsilon_jx_j\|_{L^p(G,X)}.
\end{equation}
\end{lemma}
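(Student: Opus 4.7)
The plan is to verify both parts using standard techniques for $\mathcal R$-bounded operator families (as found e.g.\ in \cite{Kunstmann-Weis-2004}), so my focus is on exhibiting the key mechanisms rather than grinding through estimates.

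For part (a) I would treat the sum first: for $T_1,\dots,T_N \in \mathcal T$ and $S_1,\dots,S_N \in \mathcal S$, the triangle inequality in $L^p(G,Y)$ gives
\[
 \biggnorm{\sum_{j=1}^N \varepsilon_j (T_j+S_j)x_j}_{L^p(G,Y)} \le \biggnorm{\sum_{j=1}^N \varepsilon_j T_j x_j}_{L^p(G,Y)} + \biggnorm{\sum_{j=1}^N \varepsilon_j S_j x_j}_{L^p(G,Y)},
\]
and each summand is dominated by the corresponding $\mathcal R$-bound times $\|\sum_j \varepsilon_j x_j\|_{L^p(G,X)}$. For the product $\mathcal U \mathcal T$, one applies $\mathcal R$-boundedness of $\mathcal U$ to the vectors $y_j := T_j x_j \in Y$, and then $\mathcal R$-boundedness of $\mathcal T$ to the original $x_j$; the two inequalities compose to give the claimed product bound. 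For the union, partition $\{1,\dots,N\}$ into $J_{\mathcal T} := \{j : T_j \in \mathcal T\}$ and $J_{\mathcal S} := \{1,\dots,N\} \setminus J_{\mathcal T}$, apply the respective $\mathcal R$-bounds to each sub-sum, and dominate each restricted Rademacher sum by the full one via conditional Jensen, using $\sum_{j \in J'} \varepsilon_j x_j = \mathbb E\bigl[\sum_{j} \varepsilon_j x_j \,\big|\, (\varepsilon_j)_{j \in J'}\bigr]$ together with independence and mean-zero of $(\varepsilon_j)_{j \notin J'}$.

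For part (b), the substitution $a_j \leadsto a_j/b_j$, $x_j \leadsto b_j x_j$ (with the convention $0/0 := 0$) reduces to $|a_j| \le 1$. In the real scalar case $(a_j) \in [-1,1]^N$ lies in the convex hull of $\{-1,1\}^N$, so we may write $(a_j) = \sum_k \lambda_k \sigma^k$ with $\lambda_k \ge 0$, $\sum_k \lambda_k = 1$, $\sigma^k \in \{-1,1\}^N$; invariance of the Rademacher distribution under coordinate-wise sign flips yields $\|\sum_j \sigma^k_j \varepsilon_j x_j\|_{L^p(G,X)} = \|\sum_j \varepsilon_j x_j\|_{L^p(G,X)}$ for each $k$, and a convex combination furnishes the bound with constant $1$. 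For complex $a_j$ one splits into real and imaginary parts, producing the stated factor $2$.

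The main obstacle is avoiding a circular dependency between (a) and (b): the cleanest proof of the union bound either invokes (b) (with $a_j \in \{0,1\}$) or the conditional-expectation argument sketched above. Since (b) admits a self-contained proof via convexity and Rademacher symmetry, I would present (b) first, so that either route to the union step in (a) remains available.
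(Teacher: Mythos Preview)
The paper does not actually prove this lemma: it is stated without proof as a well-known collection of facts (basic algebra of $\mathcal R$-bounds together with Kahane's contraction principle), with the surrounding text pointing to standard references such as \cite{Kunstmann-Weis-2004}. So there is no ``paper's own proof'' to compare against.

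Your proposal is correct and is exactly the standard argument one finds in the literature. The sum and product estimates in (a) are immediate from the triangle inequality and composition, as you wrote. Your proof of (b) via the convex-hull/sign-symmetry argument is the canonical one, and your handling of the complex case by splitting into real and imaginary parts correctly accounts for the constant $2$. Your remark about avoiding circularity is well taken: proving (b) first and then invoking it with $a_j\in\{0,1\}$ to handle the union in (a) is the cleanest route; the conditional-expectation alternative you sketch also works and keeps (a) self-contained. Either way, nothing is missing.
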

We now turn our attention to pseudo-sectorial and sectorial operators. The first mentioned class contains operators which are not necessarily injective. This will be important later on when periodic boundary conditions come into play temporarily.
\begin{definition}\label{defsec}
A closed densely defined linear operator $A$ on a Banach space $X$ is called 
{\em pseudo-sectorial} if there exist $C > 0$ and $\phi\in(0,\pi)$ such that
\[
	\Sigma_{\pi - \phi} := \{z \in \mathbb{C}\backslash\{0\};\ |\arg(z)| < \pi - \phi\} \subset \rho(-A)
\]
and
\begin{equation}\label{def_sec_t}
	\sup\{\| \lambda(\lambda+A)^{-1}\|;\ \lambda \in \Sigma_{\pi - \phi}\} \leq C.
\end{equation}
The number
\[
	\phi_A := \inf \big\{\phi;\ \rho(-A) \supset \Sigma_{\pi - \phi},\ 
	\sup\{\| \lambda(\lambda+A)^{-1}\|;\ \lambda \in \Sigma_{\pi - \phi}\} < \infty \big\}
\]
is called the {\em spectral angle} of $A$.
The class of pseudo-sectorial operators is denoted by $\Psi S(X)$.
A pseudo-sectorial operator $A$ is called {\em sectorial} if additionally $R(A)$ is dense in $X$ and $N(A) = \{0\}$.
The class of sectorial operators is denoted by $S(X)$.
\end{definition}
The following facts can be found in \cite[Chapter 2]{Haase-2006}. For each closed operator $A$ subject to condition \eqref{def_sec_t} one has $N(A) \cap \overline{R(A)} = \{0\}$, hence, density of $R(A)$ actually implies $N(A) = \{0\}$. If $X$ is reflexive we have $\overline{D(A)} = X$ and $X = N(A) \oplus \overline{R(A)}$. Thus, in particular $X$ being a UMD space \eqref{def_sec_t} implies $A \in \Psi S(X)$ and density of $R(A)$ and $N(A) = \{0\}$ are equivalent.

For $\sigma \in (0,\pi]$ let
\[
	\mathcal{H}^{\infty}(\Sigma_{\sigma}) :=
	\{f\colon \Sigma_{\sigma} \rightarrow \mathbb C;\ 
	 f \text{ is holomorphic, }	|f|_{\infty}^{\sigma} < \infty \}
\]
denote the commutative algebra of bounded, holomorphic functions
on $\Sigma_{\sigma}$. Here
$|f|_{\infty}^{\sigma} := \sup\{|f(z)|;\ z\in\Sigma_\sigma \}$.
Let $\rho(z) := \frac{z}{(1 + z)^2}$ and define the subalgebra
\begin{align*}
	\mathcal{H}^{\infty}_0(\Sigma_{\sigma}) :=
	\{ & f \in \mathcal{H}^{\infty}(\Sigma_{\sigma});\ 
	\exists\, C, \varepsilon > 0 \text{ such that } 
	|f(z)| \leq C|\rho(z)|^{\varepsilon} \text{ for all } z \in \Sigma_{\sigma} \}.
\end{align*}
Let $A$ be a pseudo-sectorial operator in $X$ with spectral angle $\phi_A$.
Pick $\sigma \in (\phi_A,\pi]$ and $\psi \in (\phi_A, \sigma)$.
The path $\Gamma := (\infty,0]e^{i\psi} \cup [0,\infty)e^{-i\psi}$
oriented counterclockwise, i.e.\ the positive real axis $\mathbb{R}_+$
lies to the left, stays with the only possible exception at zero in the
resolvent set of $A$. Hence, by Cauchy's integral formula and
the pseudo-sectoriality of $A$, the Bochner integral
\begin{equation}\label{Dunford}
	f(A) := \frac{1}{2\pi i} \int_{\Gamma}f(\mu) (\mu - A)^{-1}d\mu
\end{equation}
defines a well-defined element in $\mathcal L(X)$ for every  
$f \in \mathcal{H}_0^{\infty}(\Sigma_{\sigma})$. 
In view of pseudo-sectorial operators which do not have to be injective, the following definition of a bounded functional calculus is restricted to the class $\mathcal H_0^{\infty}(\Sigma_{\sigma})$.
\begin{definition}\label{Def_Hinf}
An operator $A \in \Psi S(X)$ is said to admit a \emph{bounded
$\mathcal H_0^{\infty}$-calculus} on $X$ if there exists a $\sigma > \phi_A$
such that
\begin{equation}\label{HUnendlich}
	\sup\{\|f(A)\|;\ f \in \mathcal H_0^{\infty}(\Sigma_{\sigma}),\ |f|_{\infty}^{\sigma} \leq 1\} \leq C_{\sigma}.
\end{equation}
We denote the class of pseudo-sectorial operators admitting a bounded
$\mathcal H_0^{\infty}$-calculus on $X$ by $\Psi\mathcal H^{\infty}(X)$.
The bound $C_{\sigma}$ in general depends on $\sigma$.
The infimum over all $\sigma>\phi_A$ such that this bound is finite
is called the \emph{$\mathcal{H}^{\infty}$-angle} of $A$ and is denoted by
$\phi_A^{\infty}$. If additionally $A \in S(X)$, then $A$ is said to admit a \emph{bounded
$\mathcal H^{\infty}$-calculus} on $X$ of $\mathcal{H}^{\infty}$-angle $\phi_A^{\infty}$. The class of such operators is denoted by $\mathcal H^{\infty}(X)$.
\end{definition}
\begin{remark}\label{Ext_Hinf}
For arbitrary $f \in \mathcal{H}^{\infty}(\Sigma_{\sigma})$ with $\rho$ from above and $A \in S(X)$ we set %
\[
	f(A) := \rho(A)^{-1}(\rho f)(A).
\]
This definition gives rise to a closed densely defined operator 
in $X$. Moreover, by Cauchy's theorem it is consistent with
the former one in the case $f \in \mathcal{H}_0^{\infty}(\Sigma_{\sigma})$ (see \cite[Chapter~2]{Haase-2006}). Due to the convergence lemma
(\cite[Lemma~2.1]{Cowling-Doust-McIntosh-Yagi-1996}, see also
\cite[Proposition~5.1.4]{Haase-2006}) the estimate \eqref{HUnendlich} extends to all $f \in \mathcal H^{\infty}(\Sigma_{\sigma})$ such that $|f|_{\infty}^{\sigma} \leq 1$.
\end{remark}
Recall our strategy to approach the non-linear thermoelastic plate equation via maximal regularity of an appropriate linearization $A$. To establish maximal regularity we have to prove $\mathcal R$-boundedness of the family $\{ \lambda(\lambda+A)^{-1};\ \lambda \in \Sigma_{\pi - \phi}\}$ of resolvents of $A$, where $\phi < \frac{\pi}{2}$ (\cite[Theorem 4.2]{Weis-2001}). If $A$ is closed and injective, this property is known as $\mathcal R$-sectoriality. Accordingly, $\mathcal R$-boundedness of the family $\{f(A);\ f \in \mathcal H_0^{\infty}(\Sigma_{\sigma}),\ |f|_{\infty}^{\sigma} \leq 1\}$ strengthens the property of a bounded $\mathcal H^\infty$-calculus.
\begin{definition}\label{Def_Rversions}
An operator $A \in \Psi S(X)$ is \emph{pseudo-$\mathcal R$-sectorial} if \eqref{def_sec_t} is valid with uniform norm boundedness replaced by $\mathcal R$-boundedness. Accordingly, it admits an \emph{$\mathcal R$-bounded
$\mathcal H_0^{\infty}$-calculus} on $X$ if \eqref{HUnendlich} is valid with uniform norm boundedness replaced by $\mathcal R$-boundedness.
The classes of these operators are denoted by $\Psi\mathcal RS(X)$ and $\Psi\mathcal {RH}^{\infty}(X)$.
If additionally $A \in S(X)$ holds, we speak of \emph{$\mathcal R$-sectoriality} and an \emph{$\mathcal R$-bounded
$\mathcal H^{\infty}$-calculus} on $X$ and denote these classes of operators by $\mathcal RS(X)$ and $\mathcal {RH}^{\infty}(X)$.
As in Definition \ref{Def_Hinf} we define the related angles $\phi_A^{\mathcal R}$ of $A \in (\Psi)\mathcal RS(X)$ and $\phi_A^{\mathcal R\infty}$ of $A \in (\Psi)\mathcal {RH}^{\infty}(X)$.
\end{definition}
\section{A combined Fourier multiplier theorem}\label{Sec_FM}

Throughout the section let $n \in \mathbb N$, $ 1 < p < \infty$, and let $E$ be a Banach space.
For $m \in L^\infty(\mathbb R^n ,\mathcal L(E))$ 
\[
	T_m f := \mathcal F^{-1} m \mathcal F f \quad (f \in \mathcal S(\mathbb R^n,E))
\]
is a well-defined mapping from $\mathcal S(\mathbb R^n,E)$ to $\mathcal S'(\mathbb R^n,E)$.
The function $m$ is called a {\em continuous, operator-valued, ($L^p$-)Fourier multiplier},
if $T_m f \in L^p(\mathbb R^n,E)$ for all $f \in \mathcal S(\mathbb R^n,E)$ and if there exists $C > 0$ such that
\[
	\| T_m f\|_{L^p(\mathbb R^n,E)} \leq C \|f\|_{L^p(\mathbb R^n,E)} \quad (f \in \mathcal S(\mathbb R^n,E)).
\]
In that case $T_m \in \mathcal L(L^p(\mathbb R^n,E))$ by density of $\mathcal S(\mathbb R^n,E) \subset L^p(\mathbb R^n,E)$ and $T_m$ is called the {\em Fourier multiplier operator associated with $m$}.
Given $M\colon \mathbb Z^n \rightarrow \mathcal L(E)$ the relation
\[
	( T_M f)\hat~  (k) = M(k) \hat f (k)
	\quad ( k \in \mathbb Z^n)
\]
for Fourier coefficients $\hat f(k)$ of $f$ defines a linear operator $T_M$ on the space of $E$-valued, trigonometric polynomials $\mathcal T(\mathcal Q_n,E)$. If there exists $C > 0$ such that
\[
	\| T_M f\|_{L^p(\mathcal Q_n,E)} \leq C \|f\|_{L^p(\mathcal Q_n,E)} \quad (f \in \mathcal T(\mathcal Q_n,E)),
\]
then $M$ is called a {\em discrete, operator-valued, ($L^p$-)Fourier multiplier}. In that case $T_M$ extends to $T_M \in \mathcal L(L^p(\mathcal Q_n,E))$ by density and $T_M$ is called the {\em Fourier multiplier operator associated with $M$}.

For the following important multiplier theorem in addition to partial derivatives of $m$ we will need
partial discrete derivatives of $M$ defined as
$ \Delta^{e_j} M(k) := 		M(k) - M(k - e_j)$. Here $e_j$ denotes the $j$-th unit vector in $\mathbb R^n$.
For arbitrary $\gamma \in \{0,1\}^n$ we set
\begin{equation}\label{disc_deriv_op}
 \Delta^{0} M = M, \quad 
 \Delta^{\gamma} M:=
 \Delta^{\gamma_1 e_1} \ldots\ \Delta^{\gamma_n e_n} M.
\end{equation}
Instead of $\gamma \in \{0,1\}^n$ we henceforth also write $0 \leq \gamma \leq 1$ or merely $\gamma \leq 1$. Furthermore, we indicate the variable (discrete) partial derivatives refer to, e.g.~we write $\partial_\xi^\gamma m(\xi)$ or $\Delta_k^\gamma M(k)$.
\begin{theorem}\label{Michlin}
Let $n_1, n_2, d \in \mathbb N$, $ 1 < p < \infty$, and let $\Lambda$ be an arbitrary index set.
Let
\[
	m_\lambda\colon \mathbb R^{n_1} \setminus \{0\} \times \mathbb Z^{n_2} \to \mathbb C^{d\times d};\ (\xi,k) \mapsto m_\lambda(\xi,k)
\]
and assume that $m_\lambda(\cdot,k) \in C^{n_1}(\mathbb R^{n_1} \setminus \{0\}, \mathbb C^{d\times d})$ for $k \in \mathbb Z^{n_2}$ and for $\lambda \in \Lambda$.
Assume there is $C > 0$ such that for each $\gamma = (\gamma_1,\gamma_2) \in \mathbb N_0^{n_1 + n_2}$ with $0 \leq \gamma \leq 1$
\begin{align*}
	\sup\big\{ |\xi^{\gamma_1} &
	\partial_\xi^{\gamma_1} m_\lambda(\xi,k)|;\ \lambda \in \Lambda,
	 (\xi,k) \in \mathbb R^{n_1} \setminus \{0\} \times \mathbb Z^{n_2} \big\} \leq C
\end{align*}
and, if $\gamma_2 \neq 0$,
\begin{align*}
	\sup\big\{ |\xi^{\gamma_1}k^{\gamma_2} &
	\partial_\xi^{\gamma_1} \Delta_k^{\gamma_2} m_\lambda(\xi,k)|;\ \lambda \in \Lambda,
	 (\xi,k) \in \mathbb R^{n_1} \setminus \{0\} \times \mathbb Z^{n_2} \setminus [-1,1]^{n_2} \big\} \leq C.
\end{align*}	
Then $\tilde m_{(\lambda,k)}(\xi) := m_\lambda(\xi,k)$ for $\xi \in \mathbb R^{n_1} \setminus \{0\}$ defines a continuous Fourier multiplier for all $(\lambda,k) \in \Lambda \times \mathbb Z^{n_2}$. Moreover, $M_{m_\lambda}(k) := \mathcal F^{-1}{\tilde m_{(\lambda,k)}} \mathcal F$ for $k \in \mathbb Z^{n_2}$ defines a discrete operator-valued Fourier multiplier and 
\[
	\{T_{M_{m_\lambda}};\ \lambda \in \Lambda\} \subset \mathcal L
	\big( L^p(\mathbb R^{n_1} \times \mathcal Q_{n_2})\big)
\]
is $\mathcal R$-bounded with $\mathcal R$-bound depending on $C$, $p$, $n_1$, and $n_2$ only.
\end{theorem}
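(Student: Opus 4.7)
The plan is to view $T_{M_{m_\lambda}}$ as the iterated composition of a continuous Mikhlin multiplier in the $\xi$-variable with a discrete Marcinkiewicz multiplier in the $k$-variable. Under the Fubini identification $L^p(\R^{n_1} \times \mathcal Q_{n_2}) \cong L^p(\mathcal Q_{n_2}, L^p(\R^{n_1}))$ the combined operator $T_{M_{m_\lambda}}$ is precisely the discrete Fourier multiplier on $\mathcal Q_{n_2}$ whose operator-valued symbol at $k \in \Z^{n_2}$ is the continuous Fourier multiplier $T_{\tilde m_{(\lambda,k)}}$ acting on $L^p(\R^{n_1})$. The two steps of the proof produce, respectively, this operator-valued symbol and $\mathcal R$-boundedness of the resulting discrete multiplier family in $\lambda$.

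In the first step I freeze $(\lambda,k)$ and observe that $\xi \mapsto m_\lambda(\xi,k)$ is a matrix-valued Mikhlin symbol on $\R^{n_1}$ with constants uniform in $(\lambda,k)$; this is exactly the $\gamma_2 = 0$ case of the hypothesis. Since $L^p(\R^{n_1})$ is UMD and enjoys property $(\alpha)$, the classical Mikhlin theorem combined with property $(\alpha)$ (equivalently, the operator-valued Mikhlin theorem) yields that the family $\{T_{\tilde m_{(\lambda,k)}} : (\lambda,k) \in \Lambda \times \Z^{n_2}\} \subset \mathcal L(L^p(\R^{n_1}))$ is $\mathcal R$-bounded with bound depending only on $C$, $p$ and $n_1$. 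Applied to the scalar symbol $\xi \mapsto k^{\gamma_2} \Delta_k^{\gamma_2} m_\lambda(\xi,k)$, the same argument uses the mixed hypothesis to deliver, for every $0 \leq \gamma_2 \leq 1$ with $\gamma_2 \neq 0$, $\mathcal R$-boundedness of $\{k^{\gamma_2} \Delta_k^{\gamma_2} T_{\tilde m_{(\lambda,k)}} : \lambda \in \Lambda,\ k \in \Z^{n_2} \setminus [-1,1]^{n_2}\}$ in $\mathcal L(L^p(\R^{n_1}))$; the key identity $k^{\gamma_2} \Delta_k^{\gamma_2} T_{\tilde m_{(\lambda,k)}} = T_{\xi \mapsto k^{\gamma_2} \Delta_k^{\gamma_2} m_\lambda(\xi,k)}$ exploits that the discrete derivative in $k$ commutes with the continuous Fourier-multiplier construction in $\xi$.

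In the second step I invoke a discrete operator-valued Marcinkiewicz-type multiplier theorem on $\mathcal Q_{n_2}$ with values in the UMD space $L^p(\R^{n_1})$ (available precisely because $L^p(\R^{n_1})$ enjoys property $(\alpha)$). The $\mathcal R$-boundedness extracted in the first step supplies exactly the required operator-valued Marcinkiewicz hypothesis on $M_{m_\lambda}$, uniformly in $\lambda \in \Lambda$, so this theorem delivers $\mathcal R$-boundedness of $\{T_{M_{m_\lambda}} : \lambda \in \Lambda\}$ on $L^p(\mathcal Q_{n_2}, L^p(\R^{n_1})) \cong L^p(\R^{n_1} \times \mathcal Q_{n_2})$ with the claimed quantitative bound.

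The main obstacle is propagating $\mathcal R$-bounds — rather than merely uniform norm bounds — through both stages of this construction. Property $(\alpha)$ of $L^p$-spaces is what upgrades the single-operator Mikhlin estimate in step one to $\mathcal R$-boundedness of the $(\lambda,k)$-parametrized operator family, and it is also what ensures that the $\mathcal R$-boundedness hypothesis on the operator-valued symbol in step two transfers to an $\mathcal R$-bound, not merely a norm bound, in the conclusion. The only remaining bookkeeping is the treatment of the exceptional set $[-1,1]^{n_2}$ for the discrete Marcinkiewicz condition, which is handled by splitting $M_{m_\lambda}$ into a finite part supported on $[-1,1]^{n_2} \cap \Z^{n_2}$ (trivial $\mathcal R$-bound, since only finitely many $k$ contribute and step one already controls each summand) and a regular part to which the discrete multiplier theorem applies directly.
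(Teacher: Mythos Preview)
Your proposal is correct and follows essentially the same two-step approach as the paper: first apply the continuous operator-valued Mikhlin theorem (Girardi--Weis, exploiting that $L^p$ has property~$(\alpha)$) to obtain $\mathcal R$-boundedness of the families $\{T_{\tilde m_{(\lambda,k)}}\}$ and $\{k^{\gamma_2}\Delta_k^{\gamma_2} T_{\tilde m_{(\lambda,k)}}\}$ in $\mathcal L(L^p(\R^{n_1}))$, then feed these $\mathcal R$-bounds into a discrete operator-valued Marcinkiewicz theorem on $\mathcal Q_{n_2}$ to obtain $\mathcal R$-boundedness of $\{T_{M_{m_\lambda}}\}$ on $L^p(\mathcal Q_{n_2},L^p(\R^{n_1}))$. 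Your explicit remarks on the role of property~$(\alpha)$ in upgrading norm bounds to $\mathcal R$-bounds at both stages, and on the commutation identity $\Delta_k^{\gamma_2} T_{\tilde m_{(\lambda,k)}} = T_{\Delta_k^{\gamma_2} \tilde m_{(\lambda,k)}}$, make precise what the paper leaves to the cited references.
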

\begin{proof}
The first assertion follows from the classical Michlin theorem. %
Due to \cite[Theorem 3.2]{Girardi-Weis-2003} $\mathcal R$-boundedness of
$\big\{\mathcal F^{-1}{\tilde m_{(\lambda,k)}} \mathcal F;\ \lambda \in \Lambda,\ 
	k \in \mathbb Z^{n_2}\big\}$ and
\begin{align*}
	\big\{ k^{\gamma_2} &
	\Delta_k^{\gamma_2} \mathcal F^{-1}{\tilde m_{(\lambda,k)}} \mathcal F;\ \lambda \in \Lambda,\ 
	k \in \mathbb Z^{n_2} \setminus [-1,1]^{n_2}, \gamma_2 \neq 0\big\} 
\end{align*}	
follows. This in turn provides the condition on $M_{m_\lambda}$ to define a discrete operator-valued Fourier multiplier (see \cite{Nau-Diss-2012} or  \cite{Arendt-Bu-2002}, \cite{Bu-Kim-2004}, and \cite{Bu-2006} or \cite{Strkalj-Weis-2007}). As in \cite{Girardi-Weis-2003} (see also \cite{Bu-Kim-2004} and \cite{Bu-2006}) we deduce $\mathcal R$-boundedness of
$\{T_{M_{m_\lambda}};\ \lambda \in \Lambda\} \subset \mathcal L
	\big( L^p(\mathcal Q_{n_2}, L^p(\mathbb R^{n_1}))\big)$.
\end{proof}

\section{Analysis of the linearized Problem - Proof of Theorem \ref{Hinf}}\label{Sec_Proofs}

Recall the matrix $M$ from \eqref{Def_M} and let $n = n_1 + n_2$. In what follows we investigate the symbol
\begin{equation}\label{eq:symbol}
	a \colon \mathbb R^{n}\to \mathbb R^{3\times 3};\ a(\zeta) := M |\zeta|^2.
\end{equation}
As in \cite{Naito-Shibata-2009} we infer parameter-ellipticity in the sense of \cite{Denk-2003} of angle less than $\frac{\pi}{2}$. More precisely, there exists $\phi < \frac{\pi}{2}$ such that all eigenvalues of $a(\zeta)$ lie in the sector $\Sigma_\phi$, i.e.
\[
	\sigma(a(\zeta)) \subset \Sigma_{\phi} \qquad (\zeta \in \mathbb R^n\setminus \{0\}).
\]
First we consider \eqref{TEP_lin_first_order} with $\Omega$ replaced by $\tilde \Omega := \mathbb R^{n_1} \times \mathcal Q_{n_2}$ and prove pseudo-$\mathcal R$-sectoriality of the $L^p(\tilde \Omega)^3$-realization $\tilde A$ of the matrix differential operator $A(D)$ defined by
\begin{equation}\label{Real_tilde}
\begin{aligned}
 D(\tilde A) & :=  \bigg( \bigcap_{\ell = 0}^2 W^{\ell,p}\big(\mathbb R^{n_1},W_{per}^{2-\ell,p}(\mathcal Q_{n_2}) \big)\bigg)^3,\\
 \tilde A u & :=  A(D) u \quad (u \in D(\tilde A)).
\end{aligned}
\end{equation}
To this end, with $\alpha \in \mathbb N_0^n$, $|\alpha| \leq 2$, let
\[
	\kappa(\lambda,\zeta) := \lambda^{1-\frac{|\alpha|}{2}}\zeta^\alpha\big(\lambda + a(\zeta)\big)^{-1} \qquad (\lambda \in \Sigma_{\pi - \phi},\ \zeta \in \mathbb R^n).
\]
Then $\kappa$ is quasi-homogeneous of degree $(2,1)$, thus there exists $C > 0$ such that $\kappa(\lambda, \zeta) \leq C$ for all $\lambda \in \Sigma_{\pi - \phi}$ and $\zeta \in \mathbb R^n$. Recall that $0 \notin \Sigma_{\pi - \phi}$. Hence, for 
$\zeta = (\zeta_1,\zeta_2)$ and the choice $\zeta = (\xi,k)$ we see that $m_\lambda(\xi,k) := \kappa(\lambda,(\xi,k))$ is uniformly bounded. The same assertion is valid for $\zeta^\gamma \partial_\zeta^\gamma \kappa(\lambda,\zeta)$ where $\gamma = (\gamma_1,\gamma_2) \in \mathbb N_0^{n_1 + n_2}$, $0 \leq \gamma \leq 1$, and $\zeta \in \mathbb R^n \setminus \{0\}$. %
Given $k \in \mathbb Z^{n_2} \setminus [-1,1]^{n_2}$ observe that
\[
	\Delta_k^{\gamma_2}\partial_\xi^{\gamma_1}m_\lambda(\xi,k)
	= \partial^{\gamma}_{\zeta}\kappa(\lambda,(\xi,\eta))
\]
for some $\eta \in [k_1, k_1 + \gamma_2^{(1)}] \times \ldots \times [k_{n_2}, k_{n_2} + \gamma_2^{(n_2)}]$ by the mean value theorem. Therefore we have
\[
	|\xi^{\gamma_1}k^{\gamma_2} \partial_\xi^{\gamma_1} \Delta_k^{\gamma_2}m_\lambda(\xi,k)|
	= |\xi^{\gamma_1}k^{\gamma_2}\partial^{\gamma}_{\zeta}\kappa(\lambda,(\xi,\eta))|
	\leq C |(\xi,\eta)^{\gamma}\partial^{\gamma}_{\zeta}\kappa(\lambda,(\xi,\eta))|
\]
uniformly in $\xi \in \mathbb R^{n_1} \setminus \{0\}$, $k \in \mathbb Z^{n_2} \setminus [-1,1]^{n_2}$, and $\lambda \in \Sigma_{\pi - \phi}$. 
Hence, $m_\lambda$ defines a combined Fourier multiplier in the sense of Theorem \ref{Michlin}. Thus we have proved
\begin{proposition}\label{PsiRS_tilde}
Let $1 < p< \infty$. Then $\tilde A \in \Psi\mathcal RS(L^p(\tilde \Omega)^3)$ and $\phi_{\tilde A}^{\mathcal R} < \frac{\pi}{2}$.
Moreover, for each $\phi > \phi_{\tilde A}^{\mathcal R}$ it holds that
\begin{align*}%
	&\mathcal R(\{\lambda^{1-\frac{|\alpha|}{2}}\partial^\alpha(\lambda + \tilde A)^{-1};\ \lambda \in \Sigma_{\pi - \phi},\ \alpha \in \mathbb N_0^{n_1 + n_2},\  |\alpha| \leq 2\}) < \infty.
\end{align*}
\end{proposition}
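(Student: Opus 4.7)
The plan is to exhibit $\lambda^{1-|\alpha|/2}\partial^{\alpha}(\lambda+\tilde A)^{-1}$ as a Fourier multiplier operator on $L^p(\R^{n_1}\times\mathcal Q_{n_2})^3$ with symbol
\[
  m_\lambda^\alpha(\xi,k) := \kappa_\alpha(\lambda,(\xi,k)),
  \qquad
  \kappa_\alpha(\lambda,\zeta) := \lambda^{1-|\alpha|/2}(i\zeta)^{\alpha}\bigl(\lambda+a(\zeta)\bigr)^{-1},
\]
and then invoke the combined multiplier Theorem~\ref{Michlin} to obtain the $\mathcal R$-bound. The crucial inputs are already assembled in the paragraph preceding the statement: parameter-ellipticity of the matrix symbol $a(\zeta)=M|\zeta|^2$ of angle $\phi_a<\pi/2$ (so $\lambda+a(\zeta)$ is invertible for every $\lambda\in\Sigma_{\pi-\phi}$, $\phi\in(\phi_a,\pi/2)$, and every $\zeta\in\R^n$, including $\zeta=0$ since $\lambda\neq0$), together with the quasi-homogeneity of $\kappa_\alpha$ of degree $(2,1)$ in $(\zeta,\lambda)$.

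First, I would fix $\phi\in(\phi_a,\pi/2)$ and $\alpha$ with $|\alpha|\le 2$ and check the continuous Michlin condition on $\R^{n_1}\setminus\{0\}$: for every $\gamma_1\le 1$,
\[
  |\xi^{\gamma_1}\partial_\xi^{\gamma_1} m_\lambda^\alpha(\xi,k)|
  =|\xi^{\gamma_1}\partial_{\zeta_1}^{\gamma_1}\kappa_\alpha(\lambda,(\xi,k))|
  \le C,
\]
uniformly in $(\lambda,\xi,k)$, which is immediate from quasi-homogeneity: $|\zeta^{\gamma_1}\partial_{\zeta_1}^{\gamma_1}\kappa_\alpha(\lambda,\zeta)|$ is a bounded homogeneous function of degree $0$ that is continuous on the compact set $\{|\lambda|^{1/2}+|\zeta|=1,\,\lambda\in\overline{\Sigma_{\pi-\phi}},\,\zeta\in\R^n\setminus\{0\}\}$. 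Next, for $0\neq\gamma_2\le 1$, the mean value theorem as written in the excerpt yields
\[
  \Delta_k^{\gamma_2}\partial_\xi^{\gamma_1} m_\lambda^\alpha(\xi,k)
  =\partial_{\zeta}^{\gamma}\kappa_\alpha\bigl(\lambda,(\xi,\eta)\bigr)
\]
for some $\eta$ in the box $\prod_{j}[k_j,k_j+\gamma_2^{(j)}]$; since $k\notin[-1,1]^{n_2}$, each coordinate of $\eta$ has the same sign as the corresponding coordinate of $k$ and $|k^{\gamma_2}|\le C|\eta^{\gamma_2}|$, so quasi-homogeneity again gives the bound uniformly in $(\lambda,\xi,k)$.

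Then I would apply Theorem~\ref{Michlin} componentwise (treating the $3\times 3$-matrix valued symbol entry-by-entry) to conclude that the family
\[
  \bigl\{T_{M_{m_\lambda^\alpha}}:\lambda\in\Sigma_{\pi-\phi}\bigr\}
  \subset\mathcal L\bigl(L^p(\R^{n_1}\times\mathcal Q_{n_2})^3\bigr)
\]
is $\mathcal R$-bounded for each fixed $\alpha$. Identifying this operator with $\lambda^{1-|\alpha|/2}\partial^\alpha(\lambda+\tilde A)^{-1}$ on the Schwartz–trigonometric test class and extending by density delivers the individual $\mathcal R$-bounds; taking the union over the finitely many $\alpha$ with $|\alpha|\le 2$ and using Lemma~\ref{HintSumme}(a) yields the combined $\mathcal R$-bound stated. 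The specialization $\alpha=0$ then gives pseudo-$\mathcal R$-sectoriality of $\tilde A$, and since the whole argument is valid for every $\phi\in(\phi_a,\pi/2)$ one obtains $\phi_{\tilde A}^{\mathcal R}\le\phi_a<\pi/2$.

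The main obstacle is really only the bookkeeping in verifying the mixed Michlin condition uniformly in $\lambda$, i.e.\ packaging the quasi-homogeneity of $\kappa_\alpha$ and the mean value substitution $k\rightsquigarrow\eta$ into a clean uniform estimate; once this is in place, Theorem~\ref{Michlin} is the black box that does all the $\mathcal R$-bounded work. Note that injectivity of $\tilde A$ is not claimed (indeed the $k=0$ Fourier mode of $\tilde A$ annihilates constants in the periodic directions), which is precisely why the \emph{pseudo-}$\mathcal R$-sectorial class is the appropriate target.
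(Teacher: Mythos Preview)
Your proposal is correct and follows essentially the same route as the paper: define the symbol $\kappa_\alpha$, use its quasi-homogeneity to verify the mixed Michlin condition (with the mean value theorem handling the discrete differences), and then apply Theorem~\ref{Michlin} with $\Lambda=\Sigma_{\pi-\phi}$; the case $\alpha=0$ yields pseudo-$\mathcal R$-sectoriality. The only additions you make beyond the paper's own argument are the explicit compactness justification on the scaled unit sphere and the explicit appeal to Lemma~\ref{HintSumme}(a) for the finite union over $\alpha$, both of which the paper leaves implicit.
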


To show that $\tilde A$ admits an $\mathcal R$-bounded pseudo-$\mathcal H^\infty$-calculus of angle $\phi_{\tilde A}^{\mathcal R \infty} \leq \phi_{\tilde A}^\mathcal R$ let $\phi > \phi_{\tilde A}^{\mathcal R}$ be arbitrary and $f \in \mathcal H_0^\infty(\Sigma_\phi)$. With $\Gamma$ as in \eqref{Dunford}
let
\[
	h_f(\zeta) := \frac{1}{2\pi i} \int_{\Gamma}f(\mu) (\mu - a(\zeta))^{-1}d\mu.
\]
Thanks to the behavior of $f \in \mathcal H_0^\infty(\Sigma_\phi)$ close to zero this is a well-defined element in $\mathbb C^{3\times 3}$ for all $\zeta \in \mathbb R^n$.
In \cite[Theorem 5.5]{Denk-2003} existence of $C > 0$ is shown such that 
\[
	|\zeta|^{|\gamma|} |\partial^\gamma_\zeta h_f(\zeta)| \leq C |f|^\phi_\infty \qquad (\zeta \in \mathbb R^{n} \setminus \{0\})
\]
for $\gamma = (\gamma_1,\gamma_2) \in \mathbb N_0^{n_1 + n_2}$, $0 \leq \gamma \leq 1$. As above for $\gamma \neq 0$ we deduce 
\[
	|\xi^{\gamma_1}k^{\gamma_2} \partial_\xi^{\gamma_1} \Delta_k^{\gamma_2} h_f((\xi,k))| 
	\leq C |(\xi,\eta)|^{|\gamma|}|\partial^{\gamma}_{\zeta}h_f((\xi,\eta))|
	\leq C |f|^\phi_\infty
\]
uniformly in $\xi \in \mathbb R^{n} \setminus \{0\}$, $k \in \mathbb Z^{n_2} \setminus [-1,1]^{n_2}$.
Thus for all $f \in \mathcal H_0^\infty(\Sigma_\phi)$ such that $|f|^\phi_\infty \le 1$ the symbols $h_f$ define combined Fourier multipliers in the sense of Theorem \ref{Michlin} subject to a uniform bound $C >0$. Thus we have proved
\begin{proposition}\label{PsiHinf_tilde}
Let $1 < p< \infty$. Then we have $\tilde A \in \Psi\mathcal {RH}^\infty(L^p(\tilde \Omega)^3)$ and $\phi_{\tilde A}^{\mathcal R\infty} < \frac{\pi}{2}$.
\end{proposition}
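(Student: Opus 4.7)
The plan is to reproduce the strategy already used for Proposition \ref{PsiRS_tilde}, replacing the resolvent symbol $\lambda(\lambda + a(\zeta))^{-1}$ by the symbol-level Dunford integral $h_f(\zeta)$ associated with $f$, and then to invoke Theorem~\ref{Michlin} uniformly in $f$ on the unit ball of $\mathcal H_0^\infty(\Sigma_\phi)$. Fix $\phi > \phi_{\tilde A}^{\mathcal R}$ arbitrary but strictly less than $\pi/2$ (possible by Proposition~\ref{PsiRS_tilde}) and, for each $f \in \mathcal H_0^\infty(\Sigma_\phi)$, set $h_f(\zeta) := \frac{1}{2\pi i}\int_\Gamma f(\mu)(\mu - a(\zeta))^{-1}\,d\mu$ with $\Gamma$ as in \eqref{Dunford}. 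The decay of $f$ at zero and infinity combined with the parameter-ellipticity of $a$ makes this a norm-convergent $\mathbb C^{3\times 3}$-valued integral for every $\zeta \in \mathbb R^n$.

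The first step is to identify the multiplier operator with the functional calculus. Since $(\mu - \tilde A)^{-1}$ is the Fourier multiplier operator with symbol $(\mu - a(\zeta))^{-1}$ (by Proposition~\ref{PsiRS_tilde} and uniqueness of the symbol on $\tilde\Omega = \mathbb R^{n_1}\times\mathcal Q_{n_2}$), a Fubini argument exchanging the contour integral with the combined Fourier/Fourier-series inversion shows that $f(\tilde A)$ is exactly the combined Fourier multiplier operator associated with $(\xi,k)\mapsto h_f((\xi,k))$. The second step is to establish the symbol estimates. By \cite[Theorem~5.5]{Denk-2003}, there is $C>0$ independent of $f$ such that $|\zeta|^{|\gamma|}|\partial_\zeta^\gamma h_f(\zeta)| \le C|f|_\infty^\phi$ for every $\zeta \in \mathbb R^n\setminus\{0\}$ and every $\gamma \in \mathbb N_0^n$ with $0 \le \gamma \le 1$. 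Applying the same mean-value-theorem step as in the derivation of Proposition~\ref{PsiRS_tilde}, the continuous estimate in the $k$-directions is converted into the discrete estimate
\[
	|\xi^{\gamma_1} k^{\gamma_2}\,\partial_\xi^{\gamma_1}\Delta_k^{\gamma_2} h_f((\xi,k))|
	\le C|f|_\infty^\phi
\]
uniformly in $\xi \in \mathbb R^{n_1}\setminus\{0\}$, $k \in \mathbb Z^{n_2}\setminus[-1,1]^{n_2}$, and in $f$ with $|f|_\infty^\phi \le 1$.

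The third step is then to apply Theorem~\ref{Michlin} with parameter set $\Lambda := \{f \in \mathcal H_0^\infty(\Sigma_\phi): |f|_\infty^\phi \le 1\}$ and multiplier $m_f(\xi,k) := h_f((\xi,k))$. The theorem yields $\mathcal R$-boundedness of $\{T_{M_{m_f}}: f \in \Lambda\} = \{f(\tilde A): f \in \Lambda\}$ in $\mathcal L(L^p(\tilde\Omega)^3)$, with $\mathcal R$-bound depending only on $p$, $n_1$, $n_2$, and the constant $C$ above. By Definition~\ref{Def_Rversions} this means $\tilde A \in \Psi\mathcal{RH}^\infty(L^p(\tilde\Omega)^3)$ with $\phi_{\tilde A}^{\mathcal R\infty} \le \phi$; since $\phi > \phi_{\tilde A}^{\mathcal R}$ was arbitrary, we obtain $\phi_{\tilde A}^{\mathcal R\infty} \le \phi_{\tilde A}^{\mathcal R} < \pi/2$. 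The main obstacle I anticipate is the bookkeeping in the first step, namely a clean justification that $f(\tilde A)$ coincides with the operator whose combined symbol is $h_f$: one must control the $\mu$-integrand uniformly on $\Gamma$ so that Fubini is legal in the sense of Bochner integrals in $\mathcal L(L^p(\tilde\Omega)^3)$. All remaining work is the symbol estimate already supplied by \cite[Theorem~5.5]{Denk-2003} and the mean-value-theorem discretization that has been executed once in the proof of Proposition~\ref{PsiRS_tilde}.
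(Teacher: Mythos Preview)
Your proposal is correct and follows essentially the same route as the paper: define $h_f$ via the Dunford integral, invoke \cite[Theorem~5.5]{Denk-2003} for the Mikhlin-type bound $|\zeta|^{|\gamma|}|\partial_\zeta^\gamma h_f(\zeta)|\le C|f|_\infty^\phi$, convert continuous into discrete derivatives via the mean value theorem exactly as for Proposition~\ref{PsiRS_tilde}, and apply Theorem~\ref{Michlin} with $\Lambda$ the unit ball of $\mathcal H_0^\infty(\Sigma_\phi)$. Your explicit ``first step'' identifying $f(\tilde A)$ with the multiplier operator of symbol $h_f$ via Fubini is left implicit in the paper but is a welcome clarification; otherwise the arguments coincide.
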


Now we are in the position to treat the $L^p(\Omega)^3$-realization $A$ as defined in \eqref{Real} by means of reflection.

After a change of coordinate directions let $\Omega = \mathbb R^{n_1} \times  (0,\infty)^{n_3} \times (0,\pi)^{n_2}$ and $u \in L^p(\Omega)^3$. We define the extension $\mathfrak E u := \mathfrak E_2 \mathfrak E_1 u$ to $\tilde \Omega := \mathbb R^{n_1 + n_3} \times \mathcal Q_{n_2}$ in two steps. First, let $\mathfrak E_1$ extend $u$ to $\mathbb R^{n_1+n_3} \times (0,\pi)^{n_2}$ such that $\mathfrak E_1 u$ is odd with respect to $x_j = 0$ in each coordinate direction $x_j$ for $j = n_1 + 1, \ldots, n_1 + n_3$. Afterwards let $\mathfrak E_2$ extend $\mathfrak E_1 u$ to $\tilde \Omega$ such that $\mathfrak E_2 \mathfrak E_1 u$ is odd with respect to $x_j = \pi$ for $ j = n_1 + n_3 + 1,\ldots, n_1 + n_2 + n_3$.
This construction gives rise to an extension operator $\mathfrak E \in \mathcal L(L^p(\Omega)^3,L^p(\tilde \Omega)^3)$.

\begin{proof}[Proof of Theorem \ref{Hinf}]
Let $\mathfrak R \in \mathcal L(L^p(\tilde \Omega)^3,L^p(\Omega)^3)$ denote the operator of restriction and let $\tilde A$ be defined as in \eqref{Real_tilde} with $n_1$ replaced by $n_1 + n_3$. Then $\rho(\tilde A) \subset \rho(A)$ and 
\begin{equation}\label{Ext_resolvent}
	\big(\lambda - A\big)^{-1} = \mathfrak R \big(\lambda -\tilde A\big)^{-1} \mathfrak E
	\quad (\lambda \in \rho(\tilde A))
\end{equation}
by construction. Here we have used that 
\begin{equation}\label{Ext_Delta}
	u \in D(A) \Rightarrow \mathfrak E u \in D(\tilde A)
	\text{ and } \tilde A (\mathfrak E u) = \mathfrak E (A u).
\end{equation}
In turn we have made use of the fact that $u \in W^{2,p}(\tilde \Omega)^3$ being odd with respect to $x_j = 0$ for $j = n_1 + 1,\ldots,n_1 + n_3$, implies Dirichlet conditions of $u|_{\Omega}$ on $\{x \in \overline \Omega;\ x_j = 0 \}$ for each $j = n_1 + 1,\ldots,n_1 + n_3$. Accordingly, $u \in W^{2,p}(\tilde \Omega)^3$ being odd with respect to $x_j = \pi$ and $2\pi$-periodic with respect to $x_j$ for $j = n_1 + n_3 + 1,\ldots,n_1 + n_2 + n_3$, yields Dirichlet conditions of $u|_{\Omega}$ on $\{x \in \overline \Omega;\ x_j = 0 \text{ or } x_j = \pi \}$ for each $j = n_1 + n_3 + 1,\ldots,n_1 + n_2 + n_3 $.

As Propositions \ref{PsiRS_tilde} and \ref{PsiHinf_tilde} apply to $\tilde A$ with $n_1$ being replaced by $n_1 + n_3$, relation \eqref{Ext_resolvent} and Lemma \ref{HintSumme} prove 
$A \in \Psi\mathcal {RH}^\infty(L^p(\Omega)^3)$ and $\phi_{A}^{\mathcal R\infty} < \frac{\pi}{2}$.
Moreover, for each $\phi > \phi_{A}^{\mathcal R \infty}$ it holds that
\begin{align}\label{R_bounds}
	&\mathcal R(\{\lambda^{1-\frac{|\alpha|}{2}}\partial^\alpha(\lambda + A)^{-1};\ \lambda \in \Sigma_{\pi - \phi},\ \alpha \in \mathbb N_0^{n_1 + n_2 + n_3},\  |\alpha| \leq 2\}) < \infty.
\end{align}

It remains to prove injectivity of $A$ and that $n_2 \neq 0$ implies $0 \in \rho(A)$.

First assume $n_2 \neq 0$. Then $\mathfrak E f$ is odd with respect to $x_{j} = \pi$ for each $j = n_1 + n_3 + 1,\ldots,n_1 + n_2 + n_3$ and each $f \in L^p(\Omega)^3$. Hence, 
\[
	\mathfrak E f \in L^p_{(0)}(\mathcal Q_{n_2}, L^p(\mathbb R^{n_1 + n_3}))^3:= \{ u \in L^p(\mathcal Q_{n_2}, L^p(\mathbb R^{n_1 + n_3}))^3;\ \int_{\mathcal Q_{n_2}} u = 0\}.
\]
Let us consider the equation $\tilde A u = f$ in $L^p_{(0)}(\mathcal Q_{n_2}, L^p(\mathbb R^{n_1 + n_3}))^3$. We calculate the Fourier coefficients with respect to $\mathcal Q_{n_2}$ and apply the operator-valued Fourier multiplier result from \cite{Bu-2006} or \cite{Strkalj-Weis-2007}. This proves unique solvability of $\tilde A u = f$, i.e., invertibility of $\tilde A$ restricted to $L^p_{(0)}(\mathcal Q_{n_2}, L^p(\mathbb R^{n_1 + n_3}))^3$. Now \eqref{Ext_resolvent} shows $0 \in \rho(A)$.

Now let $n_2 = n_3 = 0$, $n_1 \neq 0$  and assume $A u = 0$. For $\lambda \in (0,\infty)$ this gives $(\lambda + A)u = \lambda u$, hence for some constant $C \ge 0$ one has $\|\partial^\beta u\|_{L^p(\mathbb R^{n_1})^3} \leq C\lambda\|u\|_{L^p(\mathbb R^{n_1})^3}$ for each $|\beta| = 2$ by the $\mathcal R$-boundedness result \eqref{R_bounds}. As $\lambda \in (0,\infty)$ is arbitrary for each $|\beta| = 2$ this proves $\|\partial^\beta u\|_{L^p(\mathbb R^{n_1})^3}= 0$ and so $u = 0$. Thus $A$ is injective. Injectivity in case of arbitrary $n_1, n_3$ now follows from \eqref{Ext_Delta}. Altogether we have proved $A \in \mathcal {RH}^\infty(L^p(\Omega)^3)$ with $\phi_{A}^{\mathcal R\infty} < \frac{\pi}{2}$ and that $n_2 \neq 0$ implies $0 \in \rho(A)$.
\end{proof}
\section{Existence, Uniqueness and Regularity of the Solution}

In this section we apply our results on maximal regularity of the linearized problem to show local existence, uniqueness and regularity of the strong solution. We will need the fractional Sobolev spaces $W^s_p(\Omega)$ defined in Section~\ref{Sec_FSandCO}. We introduce the maximal regularity space $MR_p(0,T) \coloneqq W^{1,p}((0,T), L^p(\Omega)^3) \cap L^p((0,T), D(A))$ to shorten our notation. In the following we will make frequent use of the embedding $MR_p(0,T) \hookrightarrow C([0,T],I_p(A))$~\cite[Lemma~4.1]{Arendt-Duelli-2006}, in particular the evaluation at time zero is well-defined.

\begin{proof}[Proof of Theorem~\ref{Non-lin}]
	Let $T > 0$. We define the linear operator $L_T$ obtained from the linearized equation by
		\begin{align*}
			L_T: MR_p(0,T) & \to L^p((0,T);L^p(\Omega)^3) \times I_p(A) \\
			U & \mapsto \begin{pmatrix} U_t + AU, & U(0) \end{pmatrix}^{T}.
		\end{align*}
	By the maximal regularity of $A$ (Corollary~\ref{Cor_MR}), $L_T$ is an isomorphism. Further, given an initial value $U_0 \in I_p(A)$ we define $G_T(U) = \begin{pmatrix} \Phi(U), & U_0 \end{pmatrix}$ with the same domain and codomain as $L_T$. Observe that $U$ being a strong solution for the initial value $U_0$ in the time interval $[0, T)$ is equivalent to $U$ being a fixed point of $L_T^{-1}G_T$, i.e. $U = (L_T^{-1}G_T)(U)$. We want to apply the Banach fixed point theorem to show the existence of such a fixed point. 
	
	Observe that
		\[ \Delta (U_1^3) = 3 \Delta U_1 U_1^2 + 6 \nabla U_1 \nabla U_1 U_1. \]
	We now show that $U_1, \nabla U_1 \in C_b((0,T); C_b(\Omega))$ which implies by using the Hölder inequality that $G_T$ is well-defined. Indeed, one has for $s \in (0,1)$ by~\cite[Section~7.32]{Adams-2003}
		\begin{align*} 
			\MoveEqLeft W^{1,p}((0,T), L^p(\Omega)) \cap L^p((0,T), W^{2,p}(\Omega)) \\
			& \hookrightarrow [L^p((0,T), W^{2,p}(\Omega)), W^{1,p}((0,T), L^p(\Omega))]_{s,p} =  W^{s}_p((0,T), W^{2(1-s)}_p(\Omega)).
		\end{align*}
	Now for $s$ such that $sp > 1$ and $(1-2s)p > n$ the Sobolev embedding theorem~\cite[Theorem~7.34]{Adams-2003} yields
		\[ \nabla U_1 \in W^{s}_p((0,T), W^{1 - 2s}_p(\Omega)) \hookrightarrow C_b((0,T), C_b(\Omega)). \]
	We let $s = \frac{1+\epsilon}{p}$ for $\epsilon > 0$. Then $sp > 1$ and $(1-2s)p > 1$ for $\epsilon$ small enough as we have assumed $p > n + 2$ and the embedding holds.
	
	We have already seen that $U$ being a strong solution in $[0,T)$ is equivalent to $U$ being a solution of $L_TU = G_TU$. Choose $T_0 > 0$ and let $U^* = L_{T_0}^{-1}(0, U_0) \in MR_p(0,T_0)$. Under the substitution $V = U - U^*$ we obtain for $T \le T_0$
		\begin{equation*}
			L_T(V) = G_T(V + U^*) - L_T(U^*) = \begin{pmatrix} \Phi(V + U^*) \\ U_0 \end{pmatrix} - \begin{pmatrix} 0 \\ U_0 \end{pmatrix} = \begin{pmatrix} \Phi(V + U^*) \\ 0 \end{pmatrix}.
		\end{equation*}
	This way we have reduced the problem to the fixed point problem $V = L_{T}^{-1} \begin{pmatrix} \Phi(V + U^*), 0 \end{pmatrix}^T$ in the closed subspace $MR_{p,0}(0,T) \coloneqq \{ U \in MR_p(0,T): U(0) = 0 \}$ of functions of trace zero. The main advantage is that the norm of the inverse of $L_T$ restricted to $MR_{p,0}(0,T)$ is uniformly bounded in $[0,T_0]$, say by $M$. In the following we will not distinguish between $L_T$ and its restriction in our notation.
	Since $\Phi: MR_p(0,T) \to L^p((0,T), L^p(\Omega)^3)$ is continuously differentiable and satisfies $\Phi'(0) = 0$, there is an $r > 0$ such that $\norm{\Phi^{\prime}(W)} \le \frac{1}{2M}$ for all $W \in B_{2r}(0)$. Now it follows from the dominated convergence theorem that for $T \in [0,T_0]$ sufficiently small one has $\norm{U^*|_{[0,T]}} < r$ in $MR_p(0,T)$. Then for $V \in B_r(0)$ we have
		\[ \norm{L_T^{-1} \Phi(V + U^*)} \le \norm{L_T^{-1}} \sup_{W \in B_{2r}(0)} \norm{\Phi^{\prime}(W)} \norm{V + U^*} \le \frac{2r \norm{L_T^{-1}}}{2M} \le r. \]
	Thus $L_T^{-1} \Phi(\cdot + U^*) B_r(0) \subset B_r(0)$. Moreover, for $V_1, V_2 \in B_r(0)$ we have
		\begin{align*} \norm{L_T^{-1} \Phi(V_1 + U^*) - L_T^{-1} \Phi(V_2 + U^*)} & \le \norm{L_T^{-1}} \sup_{W \in B_{2r}(0)} \norm{\Phi'(W)} \norm{V_1 - V_2} \\
		& \le \frac{1}{2} \norm{V_1 - V_2}. \end{align*}
	Now, the Banach fixed point theorem gives us a unique fixed point $V$. Then $V + U^*$ is the unique local strong solution of the system in $[0,T)$. We have shown that for an arbitrary initial value there exists a local solution for sufficiently small times. Given an arbitrary time $T > 0$, one can show by employing the Banach fixed point theorem again and using similar estimates for the non-restricted mapping $L_T^{-1}G_T$ that a unique strong solution exists for $[0,T)$ provided the initial values are small enough.
	
	We conclude with the real-analytic dependence of the solution $U \in MR_p(0,T)$ on the initial value $U_0$. This is proven via the implicit function theorem. Again, we only demonstrate the case for arbitrary initial values and small times. The case of a fixed time and small initial values is proven analogously. We define
		\begin{align*}
			\Psi: MR_p(0,T) \times I_p(A) & \to MR_p(0,T) \\
			(V, U_0) & \mapsto L_T^{-1} \begin{pmatrix} \Phi(V + L_T^{-1}(0, U_0)), 0 \end{pmatrix}^T - V.
		\end{align*}
	Then $U = V + L_T^{-1}(0, U_0)$ is the solution for the initial value $U_0$ if and only if $\Psi(V, U_0) = 0$. Further notice that $\Psi$ is real-analytic. Given a solution $U$ to the initial value $U_0$ one has besides $\Psi(V, U_0) = 0$
		\begin{equation} D_1\Psi(V,U_0)(W,0) = L_T^{-1} ((\Phi^{\prime}(V + L_T^{-1}(0, U_0))(W), 0)^T) - W. \label{eq:inv_derv} \end{equation}
	Now, as above we see that $\norm{\Phi^{\prime}(V + L_T^{-1}(0, U_0))} \le (2M)^{-1}$. An application of the von Neumann series shows that~\eqref{eq:inv_derv} is invertible. By the implicit function theorem there exists a real-analytic function $g: I_p(A) \to MR_p(0,T)$ such that $\Psi(g(\tilde{U}_0), \tilde{U}_0) = 0$ for all $\tilde{U}_0$ in a small neighbourhood of $U_0$. Then $g(\tilde{U}_0)$ is the unique solution for the initial value $\tilde{U}_0$ showing the real-analytic dependence on the initial value.
\end{proof}

\begin{proof}[Proof of Theorem~\ref{Analytic}]
	Let $U$ be the unique strong solution in $[0, T_0)$ for the initial value $U_0$. We use a tweaked version of a trick going back to Masuda and Angenent from~\cite{Escher-Pruess-Simonett-2003-2} to prove real-analyticity of $U$. Choose $(t_0, x_0) \in (0,T_0) \times \Omega$ and $\epsilon_0 > 0$ such that $[t_0 - 3\epsilon_0, t_0 + 3\epsilon_0] \subset (0,T_0)$ and $\overline{B}_{3\epsilon_0}(x_0) \subset \Omega$. Further, we choose smooth real cut-off functions $\zeta \in C_0^{\infty}((t_0 - 2\epsilon_0, t_0 + 2\epsilon_0))$ with $\zeta \equiv 1$ on $[t_0 - \epsilon_0, t_0 + \epsilon_0]$ and $0 \le \zeta \le 1$ and $\xi \in C_0^{\infty}(B_{2\epsilon}(x_0))$ with $\xi \equiv 1$ on $\overline{B}_{\epsilon_0}$ and $0 \le \xi \le 1$.
	We now define parameterized coordinates by
		\[ \Theta_{\lambda, \mu}(t,x) \coloneqq (t + \lambda \zeta(t), x + \mu\zeta(t)\xi(x)).  \]
	Then for sufficiently small $(\lambda, \mu)$, say $(\lambda, \mu) \in B_{r_0}(0) \subset \R \times \R^n$, $\Theta_{\lambda, \mu}$ is a diffeomorphism in $(0,T) \times \Omega$~\cite[p.~15]{Escher-Pruess-Simonett-2003-2}. We denote by $\Theta^{*}_{\lambda, \mu}$ the pull-back map on functions. Note that for such $(\lambda, \mu)$ the pull-backs $U_{\lambda, \mu} \coloneqq \Theta^{*}_{\lambda, \mu} U = T_{\mu}\theta^*_{\lambda} U$, where $\theta_{\lambda}^{*}$ and $T_{\mu}$ are the pullbacks on the time and space coordinates respectively, satisfy the boundary conditions
	\[
		U_{\lambda, \mu}(0, \cdot) = U(0, \cdot) = U_0 \quad \text{and} \quad U_{\lambda, \mu}(t,x) = U(t + \lambda\zeta(t), x) = 0 \text{ on } (0,T_0) \times \partial \Omega.
	\]
	More precisely, one has $U_{\lambda, \mu} - U = 0$ outside a compact subset of $(0,T_0) \times \Omega$. It is shown in~\cite[Proposition~5.3]{Escher-Pruess-Simonett-2003-2} that
		\[ U_{\lambda, \mu} \in W^{1,p}((0,T_0), L^p(\Omega)^3) \cap L^p((0,T_0), (W^{2,p}(\Omega) \cap W_0^{1,p}(\Omega))^3) \]
	and that $U_{\lambda, \mu}$ satisfies the equation
	 \begin{align*}
	 	\partial_t U_{\lambda, \mu} & = (1 + \zeta'\lambda) T_{\mu} \theta_{\lambda}^{*} \partial_t U + B_{\mu} U_{\lambda, \mu} \\
		& = (1 + \zeta' \lambda) T_{\mu} \theta_{\lambda}^* (-AU + \Phi(U)) + B_{\mu} U_{\lambda, \mu} \\
		& = -(1 + \zeta' \lambda) T_{\mu} A (\theta^{*}_{\lambda}U) + (1 + \zeta' \lambda) T_{\mu} \Phi(\theta_{\lambda}^* U) + B_{\mu} U_{\lambda, \mu} \\
		& = -(1 + \zeta' \lambda) A U_{\lambda, \mu} + (1 + \zeta'\lambda) \Phi(U_{\lambda, \mu}) + B_{\mu}U_{\lambda, \mu} + (1 + \zeta'\lambda) C_{\mu} U_{\lambda, \mu},
	 \end{align*}
	 where $B_{\mu}$ and $C_{\mu}$ are real-analytic functions in $[0, T_0)$ with values in $\mathcal{L}((W^{2,p}(\Omega) \cap W_0^p(\Omega))^3, L^p(\Omega)^3)$ and $B_0 = C_0 = 0$. Some comments on the manipulations above are in order. Notice that in the second last equation we have used the fact that $\theta_{\lambda}^*$ commutes with $\Delta$ and therefore also with $A$ and $\Phi$. In the last equation we need to calculate the terms occuring when one interchanges $T_{\mu}$ with $\Phi$ and $A$. We do this first for $\Delta$ and a scalar-valued $u \in W^{2,p}(\Omega)$. Here we obtain in the weak sense
	 	\begin{align*}
			\Delta(T_{\mu} u) & = T_{\mu} \Delta u + \mu \biggl( \cdots \biggr),
		\end{align*}
	where the second summand is a real-analytic mapping with values in $\mathcal{L}((W^{2,p}(\Omega) \cap W_0^p(\Omega))^3, L^p(\Omega)^3)$. Hence, we obtain for $A = -M \Delta$ and $\Phi$
		\[ T_{\mu} A (\theta^*_{\lambda} U) + T_{\mu} \Phi(\theta_{\lambda}^*U) = AU_{\lambda, \mu} + \Phi(U_{\lambda,\mu}) + C_{\mu} U_{\lambda, \mu} \]
	with $C_{\mu}$ as above. Since $\Theta_{\lambda,\mu}$ is a diffeomorphism and by the above calculations, $U$ is a strong solution of the system if and only if $U_{\lambda, \mu}$ is a strong solution of
	\begin{equation}
		\label{eq:lambda}
		\begin{array}{r@{\quad=\quad}l@{\quad}l}
			W_t + (1 + \zeta^{\prime} \lambda) AW & (1+\zeta^{\prime} \lambda) \Phi(W) + D_{\mu} W & \text{in } (t,x) \in (0, T) \times \Omega, \\
			W(0,x) & U_0(0,x) & \text{in } x \in \Omega, \\
			W(t,x) & 0 & \text{on } (0,T) \times \partial \Omega,
		\end{array}
	\end{equation}
	where we have set $D_{\mu} \coloneqq B_{\mu} + (1+\zeta^{\prime}\lambda) C_{\mu}$. Motivated by the calculations we set
		\begin{align*}
			\Psi: B_{r_0}(0) \times MR_p(0,T_0) & \to L^p((0,T_0), L^p(\Omega)^3) \times I_p(A) \\
			(\lambda, \mu, W) & \mapsto \begin{pmatrix} W_t + (1+ \lambda \zeta^{\prime})(AW - \Phi(W) - C_{\mu}W) + B_{\mu} W \\ W(0) - U_0 \end{pmatrix}.
		\end{align*}
	Then $W$ is a solution of~\eqref{eq:lambda} if and only if $\Psi(\lambda, \mu, W) = 0$.  Further, note that $\Psi$ is a real-analytic mapping. We want to apply the implicit function theorem. For this notice that $\Psi(0,0,U) = 0$. The partial derivative with respect to the third component in $(0,0,U)$ is
		\begin{equation*}
			D_3\Psi(0,0,U)(0,0,V) = \begin{pmatrix} V_t + AV - \Phi'(U)(V) \\ V(0) \end{pmatrix} = L_{T_0} \biggl( \Id - L_{T_0}^{-1} \begin{pmatrix} \Phi'(U) \\ 0 \end{pmatrix} \biggr)(V).
		\end{equation*}
	We have seen in the proof of Theorem~\ref{Non-lin} that $\norm{\Phi'(U)} \le (2M)^{-1}$. Hence, by the von Neumann series, $D_3\Psi(0,0,U)(0,0,\cdot)$ is invertible. Now the implicit function theorem yields the existence of $g \in C^{\omega}(B_{\epsilon}(0), MR_p(0,T_0))$ for some $\epsilon_0 > \epsilon > 0$ with $\Psi(\lambda, \mu, g(\lambda, \mu)) = 0$ for all $(\lambda, \mu) \in B_{\epsilon}(0)$. As \eqref{eq:lambda} has a unique solution, we deduce $g(\lambda, \mu) = U_{\lambda, \mu}$. Because of the embedding $MR_p(0,T_0) \hookrightarrow C_b((0,T_0) \times \Omega)^3$ (cf.~proof of Theorem~\ref{Non-lin}) the point evaluation in $(t_0, x_0)$ is a well-defined linear and therefore a real-analytic mapping. Hence, 
	\[ (\lambda, \mu) \mapsto U(t_0 + \lambda \zeta(t_0), x_0 + \mu \zeta(t_0) \xi(x_0)) = U(t_0 + \lambda, x_0 + \mu)  \]
	is real-analytic.
\end{proof}

\bibliographystyle{amsalpha}
\bibliography{lit_platte}
\end{document}